\theoremstyle{plain}
\newtheorem{theorem}{Theorem}[section]
\newtheorem{corollary}[theorem]{Corollary}
\newtheorem{proposition}[theorem]{Proposition}
\newtheorem{remark}[theorem]{Remark}
\newtheorem{example}[theorem]{Example}
\begin{document}

\title{The ranks of twists of an elliptic curve in characteristic $3$}
\author{João Paulo Guardieiro}
\address{Instituto de Ciências  Matemáticas e Computação, Av. Trabalhador São Carlense 400, 13566-590 São Carlos, Brazil.}
\address{Bernoulli Institute,  Nijenborgh 9,
9747 AG~Groningen, the Netherlands.}
    \email{joaopaulosousa20@gmail.com}

\maketitle

\begin{center}
    \textbf{Abstract}
    
    Starting from the elliptic curve $E: y^2 = x^3 - x$ over $\mathbb{F}_9$, a curve $\mathcal{X}$ over $\mathbb{F}_{3^{2n}}$ and a cyclic cover of $\mathcal{X}$ of degree $m \in \{2,3,4,6\}$, we construct the corresponding $m$-twists over the function field $\mathbb{F}_{3^{2n}}(\mathcal{X})$. We also obtain the Mordell-Weil rank of these twists in terms of the Zeta functions of $\mathcal{X}$ and of suitable Kummer and Artin-Schreier extensions of it. Finally, we also describe the fibers of the elliptic fibration associated to such twists in the case $\mathcal{X} = \mathbb{P}^1$.
\end{center}

\section{Introduction}
The theory of elliptic curves is a very interesting topic in mathematics. Since it is possible to define a group structure on their sets of points (see \cite[Algorithm III.2.3]{silverman}), these curves have many applications in Number Theory and Cryptography (see \cite{washington}). In $1922$, Mordell proved that the set of $\mathbb{Q}$-rational points of an elliptic curve $E$ over $\mathbb{Q}$ is a finitely generated abelian group (see \cite{mordell}), that is,
$$E(\mathbb{Q}) = E(\mathbb{Q})_{\textrm{Tor}} \oplus \mathbb{Z}^r,$$
with $E(\mathbb{Q})_{\textrm{Tor}}$ finite. The integer $r$ is called the (Mordell-Weil) rank of $E$, and it is still an open problem if this number can be arbitrarily large (the current record is $29$, and was given by Elkies and Klagsbrun in $2024$). 

Mordell's theorem generalizes to function fields over a finite field, and the question whether there exist elliptic curves with arbitrarily large rank over such fields has a positive answer: in $1967$, Tate and Shafarevich constructed families of elliptic curves over $\mathbb{F}_{p^n}(t)$ with odd $p$ with this property by considering the quadratic twist of an elliptic curve over $\mathbb{F}_p$ in the function field of a hyperelliptic curve (see \cite{TateSha67}). This construction was then used by Elkies in $1994$ to give a positive answer for elliptic curves over $\mathbb{F}_{2^n}(t)$ (see \cite{elkies}). 

Recently, Bootsma, Tafazolian and Top also obtained elliptic curves of arbitrarily large rank over $\mathbb{F}_q(t)$, but now considering quartic and sextic twists in the function field of a maximal curve (for $q \equiv 3$ (mod $4$) and $q \equiv 5$ (mod $6$) respectively) and a decomposition of the set of rational points into eigenspaces (see \cite{BTT}). Finally, the author, in a joint work with Borges, Salgado and Top, studied quartic twists in the function field of a maximal curve over fields of even characteristic to obtain a result analogous to that of Elkies (see \cite{BGST}). The present paper completes and unifies these results.

When studying elliptic curves, it is common to assume that the characteristic of the field is different from $2$ and $3$, and results in these characteristics are frequently less known. In this work we will then focus on the twists of a given supersingular elliptic curve in characteristic $3$. Analogously to what is done in \cite{BTT} and \cite{BGST}, we will consider a decomposition into eigenspaces to obtain rank formulas for the twists we will construct, but we will drop the maximality condition on the curves used on their construction. Also, the quartic twist we obtain in this paper extends the construction in \cite{TateSha67}, since we consider this twist in the function field of any degree $2$ extension of a curve $\mathcal{X}$, while the original work considers $\mathcal{X} = \mathbb{P}^1$. For completeness, even though some results of this work are similar to known ones, some details of their proofs will be presented, to indicate where minor changes from the previous cases should be applied.

This paper is structured in the following way: in Section \ref{cubic-twists} we first study the cubic twists of $E$ using an Artin-Schreier extension, derive their rank formula, present a family of elliptic curves having arbitrarily large rank and describe the only singular fiber of the associated elliptic fibration. In Section \ref{other-twists} we discuss the other twists of $E$, namely the quadratic, quartics and sextics ones, also presenting their rank formulas and discussing the fibers of the associated elliptic fibration over $\mathbb{P}^1$ defined by them.

The main results of this work are the following.

\vspace{0.3cm}

\textbf{Theorem A.} Let $E / \mathbb{F}_9: y^2 = x^3 - x$ and let $\mathcal{X}$ be a curve defined over $\mathbb{F}_{3^{2n}}$. Then $E$, considered over $\mathbb{F}_{3^{2n}}(\mathcal{X})$, admits the following twists.
\begin{itemize}
	\item Quadratic twist: $E_2: \eta^2 = \xi^3 - f^2\xi$;
	\item Cubic twists: $E_{3, \tau}: \eta^2 = \xi^3 - \xi - g$ and $E_{3, \tau^2}: \eta^2 = \xi^3 - \xi + g$;
	\item Quartic twists: $E_{4, \tau}: \eta^2 = \xi^3 - f\xi$ and $E_{4, \tau^3}: \eta^2 = \xi^3 - f^3\xi$;
	\item Sextic twists: $E_{6, \tau}: \eta^2 = \xi^3 - f^2\xi + f^3g$ and $E_{6, \tau^5}: \eta^2 = \xi^3 - f^2\xi - f^3g$,
\end{itemize}
where $f, g \in \mathbb{F}_{3^{2n}}(\mathcal{X}) \backslash \mathbb{F}_{3^{2n}}$, $f$ is not a square and $g \neq h^3 - h$ for any $h \in \mathbb{F}_{3^{2n}}(\mathcal{X})$.

\vspace{0.3cm}

\textbf{Theorem B.} Let $\mathcal{X}$ be a curve defined over $\mathbb{F}_{3^{2n}}$ and consider the following coverings of $\mathcal{X}$.
$$\mathcal{C}: w^3 - w = g, \quad \mathcal{D}: s^2 = f, \quad \mathcal{G}: r^4 = f$$
and
$$\mathcal{H}: \left\{
\begin{array}{ll}
	s^2 & = f \\
	w^3 - w & = g
\end{array}
\right.,$$
where $f, g \in \mathbb{F}_{3^{2n}}(\mathcal{X}) \backslash \mathbb{F}_{3^{2n}}$, $f$ is not a square and $g \neq h^3 - h$ for any $h \in \mathbb{F}_{3^{2n}}(\mathcal{X})$. Denote by $L_E(T)$ the $L$-function of the elliptic curve $E$ over $\mathbb{F}_{3^{2n}}$. Writing the $L$-functions over $\mathbb{F}_{3^{2n}}$ of $\mathcal{X}$, $\mathcal{C}$, $\mathcal{D}$, $\mathcal{G}$ and of $\mathcal{H}$ as, respectively,
$$L_{\mathcal{X}}(T) = L_{E}(T)^{m(\mathcal{X})} \cdot X(T), \quad L_{\mathcal{C}}(T) = L_{E}(T)^{m(\mathcal{C})} \cdot C(T),$$
$$L_{\mathcal{D}}(T) = L_{E}(T)^{m(\mathcal{D})} \cdot D(T), \quad L_{\mathcal{G}}(T) = L_{E}(T)^{m(\mathcal{G})} \cdot G(T)$$
and
$$L_{\mathcal{H}}(T) = L_{E}(T)^{m(\mathcal{H})} \cdot H(T),$$
and considering the twists of $E: y^2 = x^3 - x$ over $\mathbb{F}_{3^{2n}}(\mathcal{X})$ given in Theorem A, we have
\begin{itemize}
	\item $\textrm{rank }E_2(\mathbb{F}_{3^{2n}}(\mathcal{X})) = 4[m(\mathcal{D}) - m(\mathcal{X})]$;
	\item $\textrm{rank }E_{3, \tau}(\mathbb{F}_{3^{2n}}(\mathcal{X})) = \textrm{rank }E_{3, \tau^2}(\mathbb{F}_{3^{2n}}(\mathcal{X})) = 2[m(\mathcal{C}) - m(\mathcal{X})]$;
	\item $\textrm{rank}\ E_{4, \tau}(\mathbb{F}_{3^{2n}}(\mathcal{X})) = \textrm{rank}\ E_{4, \tau^3}(\mathbb{F}_{3^{2n}}(\mathcal{X})) = 2[m(\mathcal{G}) - m(\mathcal{D})]$;
	\item $\textrm{rank}\ E_{6, \tau}(\mathbb{F}_{3^{2n}}(\mathcal{X})) = \textrm{rank}\ E_{6, \tau^5}(\mathbb{F}_{3^{2n}}(\mathcal{X})) = 2[m(\mathcal{H}) - m(\mathcal{C}) - m(\mathcal{D}) + m(\mathcal{X})]$.
\end{itemize}

\vspace{0.3cm}

\textbf{Theorem C.} Given any natural number $M$, there is $q$, which is a power of $3$, and an elliptic curve over $\mathbb{F}_q(t)$ such that its rank over $\mathbb{F}_q(t)$ equals $2 \cdot \sqrt{q}>M$.

\begin{center}
    \textbf{Notation}
\end{center}

We now fix some notation we will use throughout the text:
\begin{itemize}
\item $\mathbb{F}_{3^{2n}}$ is the finite field with $3^{2n}$ elements and characteristic $3$;
\item $E$ is the elliptic curve defined by $y^2 = x^3 - x$ over $\mathbb{F}_9$;
\item $\Phi_{u,t}$ is the  automorphism of $E$ given by $(x, y) \mapsto (u^2x + t, u^3y)$, with $u^4 = 1$ and $t \in \mathbb{F}_3$;
\item $\mathcal{X}$ is a curve over $\mathbb{F}_{3^{2n}}$ and $\mathbb{F}_{3^{2n}}(\mathcal{X})$ denotes its function field with coefficients in $\mathbb{F}_{3^{2n}}$;
\item $\mathcal{C}$, $\mathcal{D}$, $\mathcal{G}$ and $\mathcal{H}$ are coverings of $\mathcal{X}$ given by
$$\mathcal{C}: w^3 - w = g, \quad \mathcal{D}: s^2 = f, \quad \mathcal{G}: r^4 = f$$
and
$$\mathcal{H}: \left\{
\begin{array}{ll}
	s^2 & = f \\
	w^3 - w & = g
\end{array}
\right.,$$
where $f, g \in \mathbb{F}_{3^{2n}}(\mathcal{X}) \backslash \mathbb{F}_{3^{2n}}$, $f$ is not a square and $g \neq h^3 - h$ for any $h \in \mathbb{F}_{3^{2n}}(\mathcal{X})$.
\end{itemize}

\section{Preliminaries}\label{preliminaries}

In this section, we present some details on the elliptic curve we will use to obtain its twists and the theorem from Tate and Shafarevich we will use to obtain their Mordell-Weil ranks. 

One can check that the elliptic curve $E: y^2 = x^3 - x$ has $4$ $\mathbb{F}_3$-rational points, so the Frobenius endomorphism on $E$ given by $F(x, y) = (x^3, y^3)$ satisfies $F^2 = [-3]$, the multiplication by $-3$ map (see \cite[Theorem 2.3.1]{silverman}). As a consequence, $E$ is a maximal curve over $\mathbb{F}_9$. Therefore, one concludes that $E$ is maximal (respectively, minimal) over odd (respectively, even) degree extensions of $\mathbb{F}_9$. The $L$-function of $E$ over $\mathbb{F}_{3^{2n}}$ is then given by
$$L_{E/\mathbb{F}_{3^{2n}}}(T) = (3^nT + (-1)^{n + 1})^2.$$

Moreover, the automorphism group of $E$ is composed of $12$ automorphisms given by
$$\Phi_{u,t} : (x, y) \mapsto (u^2x + t, u^3y), \quad \textrm{with } u^4 = 1 \textrm{ and } t \in \mathbb{F}_3$$
(see \cite[Proposition A.1.2]{silverman}). Let $i \in \mathbb{F}_9$ be such that $i^2 = -1$. A straightforward computation shows that
\begin{equation}\label{morphisms-char3}
	\textrm{ord}(\Phi_{u, t}) = \left\{\begin{array}{rl}
		1 & , \textrm{ if } (u, t) = (1, 0) \\
		2 & , \textrm{ if } (u, t) = (-1, 0) \\
		3 & , \textrm{ if } (u, t) \in \{(1, 1), (1, -1)\} \\
		4 & , \textrm{ if } (u, t) \in \{(i, t), (-i, t);\ t \in \mathbb{F}_3\} \\
		6 & , \textrm{ if } (u, t) \in \{(-1, 1), (-1, -1)\} \\
	\end{array}\right..
\end{equation}
These automorphisms will be used to obtain the equations of the twists of $E$ as in Theorem A.

To obtain the Mordell-Weil rank of these twists, we will use the following result, which is an adaptation of a theorem obtained by Tate in \cite{Tate66} to our context. The application of Tate's result on the construction of elliptic curves with arbitrarily large rank appeared in \cite{TateSha67}, so it will be mentioned in this work as ``results from Tate and Shafarevich" or ``Tate-Shafarevich results''. For this notion, see also \cite[Chapter 13]{ShiodaSchuettBook}.

\begin{theorem}\label{tate-sha-results}
    Let $C$ be a curve over $\mathbb{F}_{3^{2n}}$. Denote by $L_E(T)$ the $L$-function of the elliptic curve $E: y^2 = x^3 - x$ over $\mathbb{F}_{3^{2n}}$. Write the $L$-function of $C$ over $\mathbb{F}_{3^{2n}}$ as
    $$L_C(T) = L_E(T)^{m(C)} \cdot G(T),$$
    with $\gcd(G, L_E) = 1$. Then
    $$\textrm{rank }E(\mathbb{F}_{3^{2n}}(C)) = 4m(C).$$
\end{theorem}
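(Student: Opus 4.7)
The approach is to apply Tate's theorem on homomorphisms of abelian varieties over finite fields to transfer the rank computation to a Tate-module computation that becomes transparent because of supersingularity. My first step is to exploit the fact that $E$ is a constant elliptic curve over the function field $\mathbb{F}_{3^{2n}}(C)$: this yields the standard identification
\[
    E(\mathbb{F}_{3^{2n}}(C)) / E(\mathbb{F}_{3^{2n}}) \;\cong\; \mathrm{Hom}_{\mathbb{F}_{3^{2n}}}(J(C), E),
\]
where $J(C)$ is the Jacobian of $C$. Since $E(\mathbb{F}_{3^{2n}})$ is finite, this already gives $\mathrm{rank}\, E(\mathbb{F}_{3^{2n}}(C)) = \mathrm{rank}\, \mathrm{Hom}_{\mathbb{F}_{3^{2n}}}(J(C), E)$.

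Next, for any prime $\ell \neq 3$, Tate's theorem \cite{Tate66} provides
\[
    \mathrm{Hom}_{\mathbb{F}_{3^{2n}}}(J(C), E) \otimes_{\mathbb{Z}} \mathbb{Q}_\ell \;\cong\; \mathrm{Hom}_{F}(V_\ell J(C), V_\ell E),
\]
where $V_\ell = T_\ell \otimes \mathbb{Q}_\ell$ and $F$ denotes the $\mathbb{F}_{3^{2n}}$-Frobenius. At this point I would use the supersingularity of $E$, already recorded in this section: from $F^2 = [-3]$ on $E/\mathbb{F}_9$, the $\mathbb{F}_{3^{2n}}$-Frobenius acts on $V_\ell E$ as the scalar $(-1)^n 3^n$, consistent with $L_E(T) = (3^n T + (-1)^{n+1})^2$.

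To finish, I would invoke Tate's semisimplicity theorem for Frobenius on $V_\ell J(C)$ and decompose $V_\ell J(C) \otimes \overline{\mathbb{Q}_\ell}$ into Frobenius eigenspaces. The hypothesis $L_C(T) = L_E(T)^{m(C)} G(T)$ with $\gcd(G, L_E) = 1$ forces the eigenvalue $(-1)^n 3^n$ to appear with algebraic multiplicity exactly $2m(C)$; call the corresponding eigenspace $W$. Since $F$ is scalar on both $W$ and $V_\ell E \otimes \overline{\mathbb{Q}_\ell}$, every linear map $W \to V_\ell E \otimes \overline{\mathbb{Q}_\ell}$ is automatically $F$-equivariant, contributing a space of dimension $2 \cdot 2m(C) = 4m(C)$; $F$-equivariant maps from the complementary eigenspaces vanish by eigenvalue mismatch. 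Adding these up gives exactly $4m(C)$, as claimed. The main obstacle is not any computation but the proper invocation of Tate's deep theorems on homomorphisms and on the semisimplicity of Frobenius; once those are granted, the rank formula drops out of a short linear-algebra argument made particularly clean by the fact that $F$ acts on $V_\ell E$ as a scalar.
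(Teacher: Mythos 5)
Your argument is correct and is essentially the one this paper relies on: Theorem~\ref{tate-sha-results} is stated without proof, as an adaptation of Tate's results \cite{Tate66} (see also \cite[Chapter 13]{ShiodaSchuettBook}), and your chain --- the constant-curve identification $E(\mathbb{F}_{3^{2n}}(C))/E(\mathbb{F}_{3^{2n}}) \cong \mathrm{Hom}_{\mathbb{F}_{3^{2n}}}(J(C),E)$, Tate's isomorphism $\mathrm{Hom}\otimes\mathbb{Q}_\ell \cong \mathrm{Hom}_F(V_\ell J(C),V_\ell E)$, semisimplicity of Frobenius, and the count $2m(C)\cdot 2 = 4m(C)$ using that Frobenius acts on $V_\ell E$ as the scalar $(-1)^n3^n$ --- is precisely the standard proof behind that citation. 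The only step worth flagging is the surjectivity (up to finite index) in the first identification, which over a finite base field follows from Lang's theorem on torsors under abelian varieties; with that noted, the proof is complete and consistent with the paper.
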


\section{The cubic twists}\label{cubic-twists}
In order to obtain equations for the cubic twists of $E$ in the function field of a curve $\mathcal{X}$, we proceed as in \cite[Section 3]{BGST}. We first consider $\Phi_{1, -1}$ as in Equation \eqref{morphisms-char3} and a curve $\mathcal{C}$ defined over $\mathbb{F}_{3^{2n}}$ that admits an order $3$ automorphism $\tau$ defined over $\mathbb{F}_{3^{2n}}$. Let $\mathcal{X} = \mathcal{C}/\langle \tau \rangle$. Since the curve $\mathcal{C}$ can be seen as a cyclic degree $3$ covering of $\mathcal{X}$, the function field of $\mathcal{C}$ can be seen as an Artin-Schreier extension of the function field of $\mathcal{X}$, that is, there exists $g \in \mathbb{F}_{3^{2n}}(\mathcal{X}) \backslash \mathbb{F}_{3^{2n}}$ such that $g \neq h^3 - h$ for any $h \in \mathbb{F}_{3^{2n}}(\mathcal{X})$ and $\mathbb{F}_{3^{2n}}(\mathcal{C}) = \mathbb{F}_{3^{2n}}(\mathcal{X})(w)$, where
$$w^3 - w = g$$
(see \cite[Proposition 3.7.8]{stichtenoth}). The automorphism $\tau$ maps $w$ into $w \pm 1$, and (considering $\tau^2$ instead of $\tau$, if necessary) we may assume $\tau(w) = w + 1$. 

In order to show that the morphisms
{\small$$(E \times \mathcal{C})/\langle \Phi_{1, -1} \times \tau \rangle \rightarrow \mathcal{C}/\langle \tau \rangle = \mathcal{X} \quad \textrm{and} \quad (E \times \mathcal{C})/\langle \Phi_{1, -1} \times \tau^2 \rangle \rightarrow \mathcal{C}/\langle \tau^2 \rangle = \mathcal{X}$$}
define elliptic surfaces over $\mathcal{X}$, we compute the fixed subfields $\mathbb{F}_{\tau}$ and $\mathbb{F}_{\tau^2}$ of $\mathbb{F}_{3^{2n}}(\mathcal{X})(x, y, w)$ under the action of these morphisms.

One has
$$
\Phi_{1, -1} \times \tau = \left\{ \begin{array}{ccl}
	x & \mapsto & x - 1 \\
	y & \mapsto & y \\
	w & \mapsto & w + 1 \\
\end{array} \right.,
$$
so it can be seen that
$$\mathbb{F}_{3^{2n}}(\mathcal{X})(y, x + w) \subseteq \mathbb{F}_{\tau} \subseteq \mathbb{F}_{3^{2n}}(\mathcal{X})(x, y, w).$$
Since 
$$[\mathbb{F}_{3^{2n}}(\mathcal{X})(x, y, w) : \mathbb{F}_{3^{2n}}(\mathcal{X})(y, x + w)] = 3 \quad \textrm{and} \quad x \notin \mathbb{F}_{\tau},$$
one obtains
$$\mathbb{F}_{\tau} = \mathbb{F}_{3^{2n}}(\mathcal{X})(y, x + w).$$

Let $\xi = x + w$, so
$$\xi^3 = x^3 + w^3 = y^2 + x + w + g = y^2 + \xi + g.$$
Therefore, the expression of the generic fiber of the elliptic surface given by $(E \times \mathcal{C})/\langle \Phi_{1, -1} \times \tau \rangle \rightarrow \mathcal{X}$ is
$$E_{3, \tau} : y^2 = \xi^3 - \xi - g.$$

The same analysis for $\Phi_{1, -1} \times \tau^2$ yields the following expression for the generic fiber of the elliptic surface:
$$E_{3, \tau^2}: y^2 = \xi^3 - \xi + g.$$

\begin{remark}
    Let $\Phi_{1, 1}$ be as in Equation \eqref{morphisms-char3}. One can check that repeating this procedure with the morphism $\Phi_{1, 1} \times \tau$ yields the elliptic fibration $E_{3, \tau^2}$, and the morphism $\Phi_{1, 1}\times \tau^2$ yields $E_{3, \tau}$.
\end{remark}

\begin{proposition}\label{prop:cubic-twists}
	The elliptic curve $E$ admits the cubic twists 
	$$E_{3, \tau} / \mathbb{F}_{3^{2n}}(\mathcal{X}): y^2 = \xi^3 - \xi - g \quad \textrm{and} \quad E_{3, \tau^2} / \mathbb{F}_{3^{2n}}(\mathcal{X}): y^2 = \xi^3 - \xi + g$$
	in the function field $\mathbb{F}_{3^{2n}}(\mathcal{C})$, which are isomorphic over $\mathbb{F}_{3^{2n}}(\mathcal{X})$.
\end{proposition}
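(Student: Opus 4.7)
The plan is to verify both claims by writing down explicit isomorphisms suggested by the preceding fixed-field analysis. To establish that $E_{3,\tau}$ is a cubic twist of $E$, I would exhibit an isomorphism $E \to E_{3,\tau}$ defined over $\mathbb{F}_{3^{2n}}(\mathcal{C})$ but not over $\mathbb{F}_{3^{2n}}(\mathcal{X})$; the same argument, with $\tau^2$ in place of $\tau$, handles $E_{3,\tau^2}$. For the isomorphism between the two twists over the base field, I would produce a change of coordinates whose coefficients already lie in $\mathbb{F}_{3^{2n}} \subseteq \mathbb{F}_{3^{2n}}(\mathcal{X})$.

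For the twist claim, the computation of $\mathbb{F}_{\tau}$ tells us exactly what substitution to use: setting $\xi = x + w$ and $\eta = y$, the Artin-Schreier relation $w^3 - w = g$ gives
$$\xi^3 - \xi - g = (x + w)^3 - (x + w) - g = x^3 + (w^3 - w) - x - g = x^3 - x = y^2 = \eta^2,$$
so $(x, y) \mapsto (x + w, y)$ is an isomorphism $E \xrightarrow{\sim} E_{3,\tau}$ defined over $\mathbb{F}_{3^{2n}}(\mathcal{C})$. Because $w \notin \mathbb{F}_{3^{2n}}(\mathcal{X})$ and $[\mathbb{F}_{3^{2n}}(\mathcal{C}) : \mathbb{F}_{3^{2n}}(\mathcal{X})] = 3$, this exhibits a genuine cubic twist. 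An analogous computation, now with $\xi = x - w$ coming from the action of $\Phi_{1,-1}\times \tau^2$ (which sends $x \mapsto x-1$ and $w \mapsto w-1$), yields the isomorphism $E \xrightarrow{\sim} E_{3,\tau^2}$ over $\mathbb{F}_{3^{2n}}(\mathcal{C})$.

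For the base-field isomorphism, I would exploit that $\mathbb{F}_9 \subseteq \mathbb{F}_{3^{2n}}$ contains an element $i$ with $i^2 = -1$, and define $\varphi \colon E_{3,\tau} \to E_{3,\tau^2}$ by $(\xi, \eta) \mapsto (-\xi,\, i\eta)$. Since
$$(i\eta)^2 = -\eta^2 = -(\xi^3 - \xi - g) = (-\xi)^3 - (-\xi) + g,$$
the map $\varphi$ indeed lands on $E_{3,\tau^2}$, and all its coefficients lie in $\mathbb{F}_{3^{2n}}$, so $\varphi$ is defined over $\mathbb{F}_{3^{2n}}(\mathcal{X})$. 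The main (and very mild) obstacle is precisely this last rationality check: conceptually, the two twisting cocycles differ by the order-$4$ automorphism $\Phi_{-i,0} \in \mathrm{Aut}(E)$, and what makes them equivalent already over the base is that this automorphism is itself defined over $\mathbb{F}_{3^{2n}}$. The sign bookkeeping when switching between $\tau$ and $\tau^2$ is then handled transparently by the substitutions dictated by the fixed-field computations.
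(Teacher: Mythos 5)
Your proposal matches the paper's proof essentially verbatim: the same substitutions $\xi = x\pm w$ giving isomorphisms $E\cong E_{3,\tau}$ and $E\cong E_{3,\tau^2}$ over the cubic extension $\mathbb{F}_{3^{2n}}(\mathcal{C})$, and the same map $(\xi,\eta)\mapsto(-\xi,i\eta)$ with $i\in\mathbb{F}_{3^{2n}}$, $i^2=-1$, giving the isomorphism $E_{3,\tau}\cong E_{3,\tau^2}$ over $\mathbb{F}_{3^{2n}}(\mathcal{X})$. The verification computations and the rationality discussion are the same as in the paper, so there is nothing further to compare.
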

\begin{proof}
	One can check that $E$ is isomorphic to $E_{3, \tau}$ and $E_{3, \tau^2}$ by considering the following morphisms:
	$$\begin{array}{ccc}
		E & \cong & E_{3, \tau} \\
		(x, y) & \mapsto & (x + w, y) \\
		(\xi - w, y) & \mapsfrom & (\xi, y)
	\end{array}
	\quad \textrm{and} \quad
	\begin{array}{ccc}
		E & \cong & E_{3, \tau^2} \\
		(x, y) & \mapsto & (x - w, y) \\
		(\xi + w, y) & \mapsfrom & (\xi, y)
	\end{array}.$$
	This isomorphism is defined over a cubic extension of $\mathbb{F}_{3^{2n}}(\mathcal{X})$, and cannot be done over any smaller extension (i.e., it cannot be done over $\mathbb{F}_{3^{2n}}(\mathcal{X})$). Therefore, we conclude that $E_{3, \tau}$ and $E_{3, \tau^2}$ are cubic twists of $E$.
	
	Furthermore, since $\mathbb{F}_{3^2} \subseteq \mathbb{F}_{3^{2n}}$, there is an element $i \in \mathbb{F}_{3^{2n}}$ such that $i^2 = -1$. The morphism $\nu: E_{3, \tau} \rightarrow E_{3, \tau^2}$ given by $\nu(\xi, y) = (-\xi, iy)$ admits the inverse morphism $\overline{\nu}: E_{3, \tau^2} \rightarrow E_{3, \tau}$ given by $\overline{\nu}(\xi, y) = (-\xi, -iy)$, and we conclude that $E_{3, \tau}$ and $E_{3, \tau^2}$ are isomorphic over $\mathbb{F}_{3^{2n}}(\mathcal{X})$.
\end{proof}

\subsection{The rank formula}
The previous proposition implies that the Mordell-Weil ranks of $E_{3, \tau}$ and of $E_{3, \tau^2}$ over $\mathbb{F}_{3^{2n}}(\mathcal{X})$ are the same. We will now prove one item of Theorem B: we will find a rank formula for these elliptic curves in terms of the decomposition of the $L$-functions of the curves $\mathcal{X}$ and $\mathcal{C}$. For this, we will decompose the set of $\mathbb{F}_{3^{2n}}(\mathcal{C})$-rational points of these twists into the sum of eigenspaces. This method was first used in \cite{BTT}.

\begin{theorem}\label{teo:rank-cubic-twist}
	Let $\mathcal{C}$ be a curve defined over $\mathbb{F}_{3^{2n}}$ that admits an order $3$ automorphism $\tau$, $\mathcal{X} = \mathcal{C}/ \langle \tau \rangle$ and $E_{3, \tau}$ and $E_{3, \tau^2}$ be the elliptic curves over $\mathbb{F}_{3^{2n}}(\mathcal{X})$ defined by
	$$E_{3, \tau}: y^2 = \xi^3 - \xi - g \quad \textrm{and} \quad E_{3, \tau^2}: y^2 = \xi^3 - \xi + g.$$
	Denote by $L_E(T)$ the $L$-function of the elliptic curve $E$ over $\mathbb{F}_{3^{2n}}$. Writing the $L$-functions of $\mathcal{X}$ and of $\mathcal{C}$ over $\mathbb{F}_{3^{2n}}$ as, respectively,
	$$L_{\mathcal{X}}(T) = L_{E}(T)^{m(\mathcal{X})} \cdot X(T) \quad \textrm{and} \quad L_{\mathcal{C}}(T) = L_{E}(T)^{m(\mathcal{C})} \cdot C(T),$$
	with $\gcd(L_E(T), X(T)) = \gcd(L_E(T), C(T)) = 1$, then
	$$\textrm{rank }E_{3, \tau}(\mathbb{F}_{3^{2n}}(\mathcal{X})) = \textrm{rank }E_{3, \tau^2}(\mathbb{F}_{3^{2n}}(\mathcal{X})) = 2m(\mathcal{C}) - 2m(\mathcal{X}).$$
\end{theorem}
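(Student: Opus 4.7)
Since Proposition \ref{prop:cubic-twists} gives an $\mathbb{F}_{3^{2n}}(\mathcal{X})$-isomorphism between $E_{3, \tau}$ and $E_{3, \tau^2}$, it is enough to compute $\textrm{rank } E_{3, \tau}(\mathbb{F}_{3^{2n}}(\mathcal{X}))$. Both curves become isomorphic to $E$ over $K := \mathbb{F}_{3^{2n}}(\mathcal{C})$ via $\phi: (x, y) \mapsto (x + w, y)$, and Theorem \ref{tate-sha-results} applied to $\mathcal{C}$ and to $\mathcal{X}$ yields $\textrm{rank } E(K) = \textrm{rank } E_{3, \tau}(K) = 4 m(\mathcal{C})$ and $\textrm{rank } E(\mathbb{F}_{3^{2n}}(\mathcal{X})) = 4 m(\mathcal{X})$. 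The plan is to realise $E_{3, \tau}(\mathbb{F}_{3^{2n}}(\mathcal{X}))$ as a fixed subspace of a suitable twisted action on $E(K)$ and read its rank off a simultaneous eigenspace decomposition.

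I would first transport the Galois action through $\phi$. A direct check on coordinates shows that $\phi(P) \in E_{3, \tau}(\mathbb{F}_{3^{2n}}(\mathcal{X}))$ if and only if $\tau(P) = \Phi_{1, -1}(P)$, equivalently $P = \Phi_{1, 1}(\tau(P))$. Thus $E_{3, \tau}(\mathbb{F}_{3^{2n}}(\mathcal{X}))$ corresponds to the fixed subgroup of the twisted action $\sigma := \Phi_{1, 1} \circ \tau$ on $E(K)$, which has order three; the analogous computation for the map $(x, y) \mapsto (x - w, y)$ identifies $E_{3, \tau^2}(\mathbb{F}_{3^{2n}}(\mathcal{X}))$ with the fixed subgroup of $\Phi_{1, -1} \circ \tau$.

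Because $\Phi_{1, 1}$ is defined over $\mathbb{F}_3$, it commutes with $\tau$ as $\mathbb{Z}$-linear group automorphisms of $E(K)$, and both have order three. I would then tensor with $\mathbb{Q}(\zeta_3)$ to diagonalise both simultaneously and write
$$E(K) \otimes_{\mathbb{Z}} \mathbb{Q}(\zeta_3) = \bigoplus_{a, b \in \mathbb{Z}/3\mathbb{Z}} U_{a, b},$$
where $\tau$ acts as $\zeta^a$ and $\Phi_{1, 1}$ as $\zeta^b$ on $U_{a, b}$. Two structural inputs cut down the unknowns: first, $\Phi_{1, 1} - \mathrm{id}$ is a nonzero endomorphism of $E$, hence an isogeny with finite kernel, so the eigenvalue $1$ of $\Phi_{1, 1}$ does not occur on the free part and $U_{a, 0} = 0$ for every $a$; second, complex conjugation on $\mathbb{Q}(\zeta_3)$ lifts to an involution of the whole space that commutes with $\tau$ and $\Phi_{1, 1}$, so $\dim U_{a, b} = \dim U_{-a, -b}$.

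Setting $\delta := \dim U_{1, 1} = \dim U_{2, 2}$ and $\epsilon := \dim U_{1, 2} = \dim U_{2, 1}$, the Tate--Shafarevich inputs give $\dim U_{0, 1} + \dim U_{0, 2} = 4 m(\mathcal{X})$ and $2 \delta + 2 \epsilon = 4 m(\mathcal{C}) - 4 m(\mathcal{X})$. The twisted action $\sigma = \Phi_{1, 1} \tau$ acts as $\zeta^{a + b}$ on $U_{a, b}$, so its fixed locus is $U_{1, 2} \oplus U_{2, 1}$ and $\textrm{rank } E_{3, \tau}(\mathbb{F}_{3^{2n}}(\mathcal{X})) = 2 \epsilon$; analogously $\textrm{rank } E_{3, \tau^2}(\mathbb{F}_{3^{2n}}(\mathcal{X})) = 2 \delta$. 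The equality of these two ranks from Proposition \ref{prop:cubic-twists} forces $\delta = \epsilon = m(\mathcal{C}) - m(\mathcal{X})$, which is the stated formula. I expect the main obstacle to be the bookkeeping in the second step that correctly identifies the natural Galois action on $E_{3, \tau}$ with the twisted action $\Phi_{1, 1} \circ \tau$ on $E$; once that dictionary is in place, the eigenspace argument is routine linear algebra.
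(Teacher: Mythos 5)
Your argument is correct and is essentially the paper's proof in different packaging: the twisted action $\sigma = \Phi_{1,1}\circ\tau$ on $E(\mathbb{F}_{3^{2n}}(\mathcal{C}))$ is exactly the paper's map $\hat{\tau}$ transported through the isomorphism $(x,y)\mapsto(x+w,y)$, and your joint eigenspace count collapses to the same identity $4m(\mathcal{C}) = \textrm{rank}\,E_{3,\tau}(\mathbb{F}_{3^{2n}}(\mathcal{X})) + \textrm{rank}\,E_{3,\tau^2}(\mathbb{F}_{3^{2n}}(\mathcal{X})) + 4m(\mathcal{X})$. Both proofs rest on the same three inputs: Theorem \ref{tate-sha-results} applied to $\mathcal{X}$ and $\mathcal{C}$, the isomorphism $E\cong E_{3,\tau}$ over $\mathbb{F}_{3^{2n}}(\mathcal{C})$, and the $\mathbb{F}_{3^{2n}}(\mathcal{X})$-isomorphism $E_{3,\tau}\cong E_{3,\tau^2}$.
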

\begin{proof}
	One can check that $\rho(\xi, y) = (\xi + 1, y)$ is an automorphism of $E_{3, \tau}$ satisfying $\rho^2 + \rho = [-1]$. Let $\zeta_3$ be a (complex) primitive third root of unity. Then,
	$$V = E_{3, \tau}(\mathbb{F}_{3^{2n}}(\mathcal{C})) \otimes \mathbb{Q}$$
	can be seen as a $\mathbb{Q}(\zeta_3)$-vector space, by defining
	$$\zeta_3 \cdot (P \otimes a) = \rho(P) \otimes a.$$
	The automorphism $\tau$ of $\mathbb{F}_{3^{2n}}(\mathcal{C})$ induces a $\mathbb{Q}(\zeta_3)$-linear map $\hat{\tau}$ on $V$ as follows: one can write $P \in \mathbb{F}_{3^{2n}}(\mathcal{C})$ as
	$$P = (x_0 + x_1w + x_2w^2, y_0 + y_1w + y_2w^2),$$
	where $x_i, y_j \in \mathbb{F}_{3^{2n}}(\mathcal{X})$. Then,
	$$\hat{\tau}(P \otimes a) = (x_0 + x_1 + x_2 + (x_1 - x_2)w + x_2w^2, y_0 + y_1 + y_2 + (y_1 - y_2)w + y_2w^2) \otimes a.$$
	The $\mathbb{Q}(\zeta_3)$ linearity of $\hat{\tau}$ follows from $\rho \circ \hat{\tau} = \hat{\tau} \circ \rho$.
	
	Since $\hat{\tau}^3 = \textrm{id}_V$, the vector space $V$ can be decomposed as the sum of $3$ eigenspaces, namely those associated to the third roots of unity $\{1, \zeta_3, \zeta_3^2\}$, that is,
	$$V = \textrm{Ker}(\hat{\tau} - \textrm{id}_V) \oplus \textrm{Ker}(\hat{\tau} - \zeta_3 \cdot \textrm{id}_V) \oplus \textrm{Ker}(\hat{\tau} - \zeta_3^2 \cdot \textrm{id}_V).$$
	We now study each eigenspace.
	\begin{itemize}
		\item $\textrm{Ker}(\hat{\tau} - \textrm{id}_V)$: Comparing coordinates, one sees that $0 \neq P \otimes a \in \textrm{Ker}(\hat{\tau} - \textrm{id}_V)$ if, and only if,
		$$P \otimes a = (x_0, y_0) \otimes a,$$
		with $(x_0, y_0) \in E_{3, \tau}(\mathbb{F}_{3^{2n}}(\mathcal{X}))$. Therefore, one concludes
		$$\textrm{Ker}(\hat{\tau} - \textrm{id}_V) = E_{3, \tau}(\mathbb{F}_{3^{2n}}(\mathcal{X})) \otimes \mathbb{Q},$$
		so
		$$\dim_{\mathbb{Q}} \textrm{Ker}(\hat{\tau} - \textrm{id}_V) = \textrm{rank }E_{3, \tau}(\mathbb{F}_{3^{2n}}(\mathcal{X})).$$
		
		\item $\textrm{Ker}(\hat{\tau} - \zeta_3 \cdot \textrm{id}_V)$: Now $0 \neq P \otimes a \in\textrm{Ker}(\hat{\tau} - \zeta_3 \cdot \textrm{id}_V)$ if, and only if,
		$$P \otimes a = (x_0 + w, y_0) \otimes a,$$
		with $(x_0, y_0) \in \mathbb{F}_{3^{2n}}(\mathcal{X})$ and $(x_0 + w, y_0) \in E_{3, \tau}$. Substituting this on the equation of $E_{3, \tau}$ allows one to conclude that this is equivalent to having $(x_0, y_0) \in E(\mathbb{F}_{3^{2n}}(\mathcal{X}))$ Therefore, one obtains
		$$\textrm{Ker}(\hat{\tau} - \zeta_3 \cdot \textrm{id}_V) = E(\mathbb{F}_{3^{2n}}(\mathcal{X})) \otimes \mathbb{Q},$$
		so
		$$\dim_{\mathbb{Q}} \textrm{Ker}(\hat{\tau} - \zeta_3 \cdot \textrm{id}_V) = \textrm{rank }E(\mathbb{F}_{3^{2n}}(\mathcal{X})).$$
		Then Theorem \ref{tate-sha-results} allows one to conclude that
		$$\textrm{rank }E(\mathbb{F}_{3^{2n}}(\mathcal{X})) = 4m(\mathcal{X}).$$
		
		\item $\textrm{Ker}(\hat{\tau} - \zeta_3^2 \cdot \textrm{id}_V)$: The final eigenspace is composed of elements $0 \neq P \otimes a$ such that $$P \otimes a = (x_0 - w, y_0) \otimes a,$$
		with $(x_0, y_0) \in \mathbb{F}_{3^{2n}}(\mathcal{X})$ and $(x_0 - w, y_0) \in E_{3, \tau}$. This is equivalent to having $(x_0, y_0) \in E_{3, \tau^2}(\mathbb{F}_{3^{2n}}(\mathcal{X}))$. Therefore, one obtains
		$$\textrm{Ker}(\hat{\tau} - \zeta_3 \cdot \textrm{id}_V) = E_{3, \tau^2}(\mathbb{F}_{3^{2n}}(\mathcal{X})) \otimes \mathbb{Q},$$
		so
		$$\dim_{\mathbb{Q}} \textrm{Ker}(\hat{\tau} - \zeta_3 \cdot \textrm{id}_V) = \textrm{rank }E_{3, \tau^2}(\mathbb{F}_{3^{2n}}(\mathcal{X})).$$
		We already showed that $\textrm{rank }E_{3, \tau}(\mathbb{F}_{3^{2n}}(\mathcal{X})) = \textrm{rank }E_{3, \tau^2}(\mathbb{F}_{3^{2n}}(\mathcal{X}))$.
	\end{itemize}
	
	Since $E_{3, \tau}$ is a twist of $E$ over $\mathbb{F}_{3^{2n}}(\mathcal{C})$, it holds that $E \cong E_{3, \tau}$ over this function field and, in particular, $\textrm{rank }E_{3, \tau}(\mathbb{F}_{3^{2n}}(\mathcal{C})) = \textrm{rank }E(\mathbb{F}_{3^{2n}}(\mathcal{C}))$. By using again Theorem \ref{tate-sha-results}, one concludes that $\dim_{\mathbb{Q}}(V) = \textrm{rank }E_{3, \tau}(\mathbb{F}_{3^{2n}}(\mathcal{C})) = 4m(\mathcal{C})$, Therefore,
	$$4m(\mathcal{C}) = 2 \cdot \textrm{rank }E_{3, \tau}(\mathbb{F}_{3^{2n}}(\mathcal{X})) + 4m(\mathcal{X}).$$
\end{proof}

\begin{corollary}\label{maximal-cubic}
	Let $\mathcal{X}$ be a curve defined over $\mathbb{F}_{3^{2n}}$ and $\mathcal{C}$ be an Artin-Schreier extension of $\mathcal{X}$ defined by
	$$\mathcal{C}: w^3 - w = g$$
	which is maximal (resp. minimal) over $\mathbb{F}_{3^{2n}}$ for odd (resp. even) $n$. If $g(\mathcal{X})$ and $g(\mathcal{C})$ denote respectively the genera of $\mathcal{X}$ and $\mathcal{C}$, then
	$$\textrm{rank }E_{3, \tau}(\mathbb{F}_{3^{2n}}(\mathcal{X})) = \textrm{rank }E_{3, \tau^2}(\mathbb{F}_{3^{2n}}(\mathcal{X})) = 2g(\mathcal{C}) - 2g(\mathcal{X}).$$
\end{corollary}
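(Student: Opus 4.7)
The plan is to reduce the corollary to Theorem \ref{teo:rank-cubic-twist} by showing that, under the maximality/minimality hypothesis on $\mathcal{C}$, the integers $m(\mathcal{C})$ and $m(\mathcal{X})$ coincide respectively with $g(\mathcal{C})$ and $g(\mathcal{X})$.

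First I would recall from the preliminaries that, over $\mathbb{F}_{3^{2n}}$, one has
$$L_E(T) = (3^nT + (-1)^{n+1})^2,$$
so $L_E(T) = (1 + 3^nT)^2$ when $n$ is odd and $L_E(T) = (1 - 3^nT)^2$ when $n$ is even. Consequently, the roots of $L_E$ are $-3^n$ (respectively $3^n$), each with multiplicity $2$.

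Next I would use the hypothesis on $\mathcal{C}$: since $\mathcal{C}$ is maximal (resp. minimal) over $\mathbb{F}_{3^{2n}}$ for odd (resp. even) $n$, every Frobenius eigenvalue on $H^1(\mathcal{C})$ equals $-3^n$ (resp. $3^n$). Therefore
$$L_{\mathcal{C}}(T) = (1 + 3^nT)^{2g(\mathcal{C})} \quad \text{or} \quad L_{\mathcal{C}}(T) = (1 - 3^nT)^{2g(\mathcal{C})},$$
according to the parity of $n$. Comparing with the expression for $L_E(T)$ immediately yields $L_{\mathcal{C}}(T) = L_E(T)^{g(\mathcal{C})}$, hence $m(\mathcal{C}) = g(\mathcal{C})$ and the factor $C(T)$ from Theorem \ref{teo:rank-cubic-twist} is $1$.

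The key step I need to handle carefully is the corresponding statement for $\mathcal{X}$, since no maximality assumption is imposed directly on it. Here I would invoke the standard fact that the covering $\mathcal{C} \to \mathcal{X}$ induces an injection $\mathrm{Jac}(\mathcal{X}) \hookrightarrow \mathrm{Jac}(\mathcal{C})$ (via pullback of divisor classes), so $L_{\mathcal{X}}(T)$ divides $L_{\mathcal{C}}(T)$ in $\mathbb{Z}[T]$. Because $L_{\mathcal{C}}(T)$ is a pure power of $(1 \pm 3^nT)$, the divisor $L_{\mathcal{X}}(T)$ must also be $(1 \pm 3^nT)^{2g(\mathcal{X})}$, which forces $L_{\mathcal{X}}(T) = L_E(T)^{g(\mathcal{X})}$ and thus $m(\mathcal{X}) = g(\mathcal{X})$, with $X(T) = 1$.

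Finally I would plug these identifications into Theorem \ref{teo:rank-cubic-twist} to obtain
$$\textrm{rank } E_{3,\tau}(\mathbb{F}_{3^{2n}}(\mathcal{X})) = \textrm{rank } E_{3,\tau^2}(\mathbb{F}_{3^{2n}}(\mathcal{X})) = 2m(\mathcal{C}) - 2m(\mathcal{X}) = 2g(\mathcal{C}) - 2g(\mathcal{X}),$$
completing the proof. The only non-routine point is the divisibility $L_{\mathcal{X}}(T) \mid L_{\mathcal{C}}(T)$; everything else is a direct substitution.
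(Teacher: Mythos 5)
Your proposal is correct and follows essentially the same route as the paper: identify $m(\mathcal{C})=g(\mathcal{C})$ and $m(\mathcal{X})=g(\mathcal{X})$ from the maximality/minimality hypothesis and substitute into Theorem \ref{teo:rank-cubic-twist}. The only difference is that where the paper cites Serre's theorem (a curve covered by a maximal curve is maximal, together with \cite[Corollary 5.1.17]{stichtenoth}) to handle $\mathcal{X}$, you reprove that step via the divisibility $L_{\mathcal{X}}(T)\mid L_{\mathcal{C}}(T)$ coming from the Jacobian of $\mathcal{X}$ sitting inside that of $\mathcal{C}$ up to isogeny (note the pullback $\pi^*$ need only have finite kernel, not be injective, but that suffices), which is exactly the standard proof of Serre's theorem.
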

\begin{proof}
    This corollary follows from the fact that, if $\mathcal{C}$ is a maximal curve, then $\mathcal{X}$ is also maximal (from Serre's theorem for maximal curves), and then the $L$-function of such curves over $\mathbb{F}_{3^{2n}}$ are given by
	{\small $$L_{\mathcal{X}}(T) = (3^nT+1)^{2g(\mathcal{X})} = L_E(T)^{g(\mathcal{X})} \quad \textrm{and} \quad L_{\mathcal{C}}(T) = (3^nT + 1)^{2g(\mathcal{C})} = L_E(T)^{g(\mathcal{C})}.$$}
    (This is a consequence of \cite[Corollary 5.1.17]{stichtenoth}). In the minimal case, the $L$-functions of $\mathcal{X}$ and $\mathcal{C}$ are given by
	$$L_{\mathcal{X}}(T) = (3^nT - 1)^{2g(\mathcal{X})} \quad \textrm{and} \quad L_{\mathcal{C}}(T) = (3^nT - 1)^{2g(\mathcal{C})}.$$
\end{proof}



The following example proves Theorem C. As Tate and Shafarevich, we were able to obtain elliptic curves having arbitrarily large rank over a function field of characteristic $3$, but now using cubic twists instead of quadratic ones.

\begin{example}\label{bouw-example}
	Let $n$ be odd and $\alpha \in \mathbb{F}_{3^{2n}}$ be such that $\alpha^{3^n-1} = -1$. Then, the elliptic curves over $\mathbb{F}_{3^{2n}}(t)$
	$$E_{3, \tau}: y^2 = \xi^3 - \xi - \alpha t^{3^n+1} \quad \textrm{and} \quad E_{3, \tau^2}: y^2 = \xi^3 - \xi + \alpha t^{3^n+1}$$
	are such that 
	$$\textrm{rank }E_{3, \tau}(\mathbb{F}_{3^{2n}}(t)) = \textrm{rank }E_{3, \tau^2}(\mathbb{F}_{3^{2n}}(t)) = 2 \cdot 3^n.$$
	To prove this, consider the Artin-Schreier curve
	$$\mathcal{C}: y^3 - y = \alpha t^{3^n+1}.$$
	From \cite[Proposition 1.9.3]{bouw}, one concludes that $\mathcal{C}$ is a maximal curve over $\mathbb{F}_{3^{2n}}$. The result now follows from Corollary \ref{maximal-cubic} by observing that, in this case, $\mathcal{X} = \mathbb{P}^1$ has genus $0$. 
\end{example}

\subsection{The singular fiber}
We now consider the case that $g$ is a polynomial in $t$ (that is, $\mathcal{X} = \mathbb{P}^1$). We will show that the elliptic fibrations defined by the cubic twists we obtained have only one singular fiber and we classify it in terms of the congruence of the degree of $g$ modulo $6$ (see Proposition \ref{fiber-cubic-twist}). Finally, we apply Shioda-Tate formula to obtain the rank of those cubic twists over $\overline{\mathbb{F}}_{3^{2n}}(t)$.

In the case that $g$ is a polynomial, we can use a change of variables $\xi \mapsto \xi + c$ for some $c \in \mathbb{F}_{3^{2n}}[t]$, and it is possible to assume that $3 \nmid \deg g(t)$. The following result states that the elliptic fibrations defined by $E_{3, \tau}$ and $E_{3, \tau^2}$ have only one singular fiber, and its type only depends on the degree of $g$.

\begin{proposition}\label{fiber-cubic-twist}
	Let $g \in \mathbb{F}_{3^{2n}}[t]$ be a polynomial such that $3 \nmid \deg g$. Consider the elliptic fibrations defined by $E_{3, \tau}$ and $E_{3, \tau^2}$ as in Proposition \ref{prop:cubic-twists}. The only singular fiber of these fibrations is at infinity, and its type (following Kodaira's classification) can be found on the following table.
	
	\begin{center}
		\begin{tabular}{c|c}
			Condition on the degree of $g$ & Fiber type \\
			\hline
			$\deg g \equiv 1$ (mod $6$) & $II^*$ \\
			$\deg g \equiv 2$ (mod $6$) & $IV^*$ \\
			$\deg g \equiv 4$ (mod $6$) & $IV$ \\
			$\deg g \equiv 5$ (mod $6$) & $II$
		\end{tabular}
	\end{center}
\end{proposition}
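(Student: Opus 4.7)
The approach is two-stage. First I would eliminate the possibility of singular fibers over finite places by computing the global discriminant; then I would put the equation in minimal Weierstrass form at $s = 1/t = 0$ and apply Tate's algorithm to identify the fiber type at $\infty$ in each of the four admissible residue classes of $d = \deg g$ modulo $6$.

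For the first stage: in characteristic $3$, the discriminant of $y^2 = x^3 + A\xi + B$ simplifies to $\Delta = -A^3$, because the term $-27B^2$ vanishes. For $E_{3,\tau}$ we have $A = -1$, so $\Delta = 1 \in \mathbb{F}_{3^{2n}}[t]^\times$, and the model is smooth at every finite place of $\mathbb{P}^1$. Hence only the fiber over $\infty$ can be singular, and the same computation covers $E_{3,\tau^2}$.

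For the second stage, write $g(t) = c_d t^d + \dots + c_0$ with $c_d \neq 0$, set $s = 1/t$ and $h(s) = s^d g(1/s)$ (so $h(0) = c_d \neq 0$), and take $k = \lceil d/6 \rceil$. The substitution $\xi = X/s^{2k},\ y = Y/s^{3k}$ yields
\[
Y^2 = X^3 - s^{4k} X - s^{6k-d} h(s)
\]
at $s = 0$, and since $6k - d \in \{1,2,4,5\} < 6$ the model is already minimal. I would then run Tate's algorithm on this equation in each case. For $d \equiv 5 \pmod 6$, $v_s(a_6) = 1 < 2$ gives type $II$ at once; for $d \equiv 4 \pmod 6$, $v_s(a_6) = v_s(b_6) = 2 < 3$ together with $v_s(b_8) \geq 8$ gives type $IV$. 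For $d \equiv 2 \pmod 6$ the algorithm reaches the step where the auxiliary cubic $P(T) = T^3$ has a triple root; the subsequent quadratic $Q(Y) = Y^2 + c_d$ has two distinct roots (using $c_d \neq 0$), producing type $IV^*$. For $d \equiv 1 \pmod 6$ both $P$ and $Q$ degenerate, and the conditions $v_s(a_4) \geq 4$ together with $v_s(a_6) = 5 < 6$ close the algorithm at type $II^*$.

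The main obstacle is running Tate's algorithm cleanly through the multiple auxiliary-polynomial steps in the $IV^*$ and $II^*$ cases, where wild ramification of the order-$3$ inertia makes $v_s(\Delta) = 12k$ much larger than the tame upper bound of $10$ for type $II^*$; one must verify at each stage that the intermediate valuation and residue conditions are met so that the algorithm does not return \emph{not minimal} and force a further descent. The analysis of $E_{3,\tau^2}$ is identical, since only the sign of $g$ changes and this affects none of the valuation conditions invoked.
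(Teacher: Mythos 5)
Your proposal is correct and follows essentially the same route as the paper: the characteristic-$3$ discriminant $\Delta \equiv -a_4^3$ is a nonzero constant, so only the fiber at infinity can be singular, and after the substitution $\xi = X/s^{2k}$, $y = Y/s^{3k}$ with $k = \lceil \deg g/6\rceil$ one applies Tate's algorithm to $Y^2 = X^3 - s^{4k}X \mp s^{6k-\deg g}\tilde g(s)$. You in fact carry out the case-by-case run of Tate's algorithm (and correctly note that minimality is confirmed because the algorithm terminates before the non-minimality step, despite $v_s(\Delta) = 12k \geq 12$ caused by wild ramification), which the paper leaves as a ``direct application.''
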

\begin{proof}
	Consider the elliptic fibrations defined by
	$$E_{3, \tau}: y^2 = \xi^3 - \xi - g(t) \quad \textrm{and} \quad E_{3, \tau^2}: y^2 = \xi^3 - \xi + g(t).$$
	The discriminant of both equations is $\Delta \equiv 1$. Therefore, the only possible singular fibers of such fibrations is at the poles of $g$. Since $g$ is a polynomial, this means that the only possible singular fiber is at infinity.
	
	To study its behavior, one can follow Tate's algorithm. For this, define $n$ to be the Euler number of the elliptic fibrations, that is, the smallest integer such that
	$$\deg g(t) \leq 6n \Longleftrightarrow n = \left\lceil \frac{\deg g}{6} \right\rceil.$$
	Defining $\gamma = \frac{\xi}{t^{2n}}$, $\delta = \frac{y}{t^{3n}}$ and $s = \frac{1}{t}$ and dividing the equations of $E_{3, \tau}$ and $E_{3, \tau^2}$ by $t^{6n}$ yields
	$$\delta^2 = \gamma^3 - s^{4n}\gamma - s^{6n - \deg g}\tilde{g}(s) \quad \textrm{and} \quad \delta^2 = \gamma^3 - s^{4n}\gamma + s^{6n - \deg g}\tilde{g}(s),$$
	where $\tilde{g}$ is a polynomial in $s$. A direct application of Tate's algorithm (see \cite{tate}) now allows one to study the singular fiber at $s = 0$.
\end{proof}

\begin{remark}\label{irreducible-fibers}
	This proposition shows that the elliptic surfaces $E_{3, \tau}$ and $E_{3, \tau^2}$ have an interesting property, which is particular to fields of characteristic $2$ and $3$ (see \cite{WLang}): the presence of a unique singular fiber in an elliptic fibration. 
	
	Moreover, this proposition shows that, if $\deg g \equiv 5$ (mod $6$), then all fibers of the elliptic fibrations are irreducible, and only one of them is singular. If one can construct families of maximal Artin-Schreier curves $\mathcal{C}$ with defining polynomial having degree $5$ modulo $6$ and arbitrarily large genus, then one would obtain examples of elliptic fibrations having only irreducible fibers and the generic fiber having arbitrarily large Mordell-Weil rank.
\end{remark}

\begin{theorem}\label{geometric-cubic}
	Let $r$ be the geometric Mordell-Weil rank of $E_{3, \tau}$, which is known to be the same as that of $E_{3, \tau^2}$. Then
	$$r = 2 \deg g - 2.$$
\end{theorem}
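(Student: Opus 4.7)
The plan is to apply the Shioda-Tate formula to the smooth projective relatively minimal model $S$ of the elliptic fibration $E_{3,\tau} \to \mathbb{P}^1$ over $\overline{\mathbb{F}}_{3^{2n}}$, namely
\begin{equation*}
\rho(S) \;=\; r + 2 + \sum_{v \in \mathbb{P}^1}(m_v - 1),
\end{equation*}
where $\rho(S)$ is the geometric Picard number and $m_v$ is the number of irreducible components of the fibre $\pi^{-1}(v)$. By Proposition \ref{fiber-cubic-twist}, the right-hand sum collapses to the single contribution $m_\infty - 1$, which equals $8$, $6$, $2$, or $0$ according as $\deg g$ lies in $\{1,2,4,5\}$ modulo $6$.

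To identify $\rho(S)$, I would show that $S$ is a supersingular surface, so that $\rho(S) = b_2(S) = 12\chi(\mathcal{O}_S) - 2$. For this I would use the dominant rational map $E \times \mathcal{C} \dashrightarrow S$ arising from the quotient construction $(E \times \mathcal{C})/\langle \Phi_{1,-1} \times \tau\rangle$ together with the Künneth decomposition of $H^2_{\textrm{\'et}}(E \times \mathcal{C}, \mathbb{Q}_\ell)$: combining the supersingularity of $E$ in characteristic $3$ with the supersingularity of $\textrm{Jac}(\mathcal{C})$ pins every Frobenius eigenvalue on $H^2_{\textrm{\'et}}(S, \mathbb{Q}_\ell)$ at $3^{2n}$ times a root of unity, so that the Tate conjecture is available and $\rho(S) = b_2(S)$. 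The value $\chi(\mathcal{O}_S) = \lceil \deg g/6 \rceil$ has already been read off in the Tate-algorithm analysis carried out in the proof of Proposition \ref{fiber-cubic-twist}, so $\rho(S) = 12\lceil \deg g / 6 \rceil - 2$.

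Substituting back gives $r = 12\lceil \deg g / 6 \rceil - 3 - m_\infty$, and a direct case-analysis through the four possible residues of $\deg g$ modulo $6$ confirms $r = 2\deg g - 2$ uniformly (for instance, for $\deg g = 6k+1$ one obtains $12(k+1)-3-9 = 12k = 2\deg g - 2$, and similarly in the other three cases).

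The main obstacle is the supersingularity of $\textrm{Jac}(\mathcal{C})$, which underlies the identification $\rho(S) = b_2(S)$. I would treat it by a direct Cartier-operator computation on the explicit Artin-Schreier presentation $\mathcal{C}: w^3 - w = g(t)$: in characteristic $3$ the basis of holomorphic differentials can be written explicitly in terms of $w$, $dt$ and the coefficients of $g$, and the smallness of the characteristic forces the Cartier operator to be nilpotent, collapsing all Newton slopes of $\textrm{Jac}(\mathcal{C})$ to $1/2$ and yielding an isogeny $\textrm{Jac}(\mathcal{C}) \sim E^{g(\mathcal{C})}$ over $\overline{\mathbb{F}}_{3^{2n}}$.
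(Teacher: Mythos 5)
Your overall strategy is the same as the paper's: Shioda--Tate together with the fibre table of Proposition \ref{fiber-cubic-twist}, identifying $\rho(S)$ with $b_2(S) = 12\lceil \deg g/6\rceil - 2$ by showing the elliptic surface is Shioda-supersingular; your component counts ($II^*, IV^*, IV, II$ having $9,7,3,1$ components) and the final case check are correct. Where you differ is in how $\rho = b_2$ is justified: the paper deduces it in one line from the supersingularity of the generic fibre, whereas you reduce it, via the dominant map from $E \times \mathcal{C}$, K\"unneth, and the Tate conjecture for surfaces dominated by products of curves, to the supersingularity of $\textrm{Jac}(\mathcal{C})$. That reduction is sound, and it makes explicit exactly what the identification $\rho = b_2$ requires.

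The gap is in your last step. Nilpotency of the Cartier operator only says the $p$-rank of $\mathcal{C}$ is zero (which is indeed true here, by Deuring--Shafarevich, since $w^3 - w = g(t)$ with $g$ a polynomial is ramified only above $t = \infty$); for genus at least $3$ this does \emph{not} force all Newton slopes to be $1/2$ (slopes such as $1/3, 2/3$ are perfectly compatible with a nilpotent Cartier operator), so the inference ``Cartier nilpotent $\Rightarrow$ $\textrm{Jac}(\mathcal{C}) \sim E^{g(\mathcal{C})}$'' is false. Moreover, the statement you are trying to prove there fails for general admissible $g$: for $w^3 - w = t^d$ the curve is supersingular if and only if $-1$ is a power of $3$ modulo $d$ (purity of the relevant Gauss sums), which fails already for $d = 11$; and since, by your own K\"unneth bookkeeping (equivalently, by writing $E(\overline{\mathbb{F}}_{3^{2n}}(\mathcal{C})) \bmod \textrm{constants} \cong \textrm{Hom}(\textrm{Jac}(\mathcal{C}), E)$ and decomposing into $\tau$-eigenspaces), the geometric rank equals twice the multiplicity of $E$ in the isogeny decomposition of $\textrm{Jac}(\mathcal{C})$, no alternative argument can close this step without an additional hypothesis guaranteeing supersingularity of $\mathcal{C}$, such as the maximality/minimality assumed in Corollary \ref{maximal-cubic} and realized in Example \ref{bouw-example}. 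Be aware that the paper's own proof buries the same issue in the assertion that supersingularity of the generic fibre alone yields $\rho = b_2$; your more explicit route exposes the hypothesis that assertion actually needs, but as written your justification of it does not work.
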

\begin{proof}
	It follows from the proof of the previous proposition that the second Betti number of this elliptic surfaces is
	$$b_2(E_{3, \tau}) = 12 \left\lceil \frac{\deg g}{6} \right\rceil - 2.$$
	(see \cite[Section 6.10]{ShiodaSchuett}).
	
	Since $E_{3, \tau}$ is a supersingular elliptic curve, it follows that $\rho(E_{3, \tau})$, the rank of the Néron-Severi groups of the elliptic surface associated to it, equals its second Betti number. The result now follows from Shioda-Tate formula (see \cite[Corollary 6.13]{ShiodaSchuett}) and the information on its singular fiber.
\end{proof}

As a corollary of this theorem, we obtain the following result.

\begin{corollary}
	If $\mathcal{C}$ is an Artin-Schreier curve which is maximal (respectively, minimal) over $\mathbb{F}_{3^{2n}}$ for odd (respectively, even) $n$, then the geometric rank Mordell-Weil rank of $E_{3, \tau}$ and $E_{3, \tau^2}$ is attained over $\mathbb{F}_{3^{2n}}(t)$.
\end{corollary}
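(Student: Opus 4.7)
The plan is to match the arithmetic rank given by Corollary \ref{maximal-cubic} with the geometric rank given by Theorem \ref{geometric-cubic}, via an explicit computation of the genus of $\mathcal{C}$. Since the base curve here is $\mathcal{X} = \mathbb{P}^1$, which has genus zero, Corollary \ref{maximal-cubic} immediately yields
$$\textrm{rank } E_{3, \tau}(\mathbb{F}_{3^{2n}}(t)) = 2g(\mathcal{C}).$$

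First I would compute $g(\mathcal{C})$ for the Artin-Schreier curve $\mathcal{C}: w^3 - w = g(t)$, recalling the standing normalization $3 \nmid \deg g$ from the paragraph preceding Proposition \ref{fiber-cubic-twist}. The cover $\mathcal{C} \to \mathbb{P}^1$ is unramified over the affine line and totally ramified above infinity, with different exponent $(3-1)(\deg g + 1)$ coming from the Artin-Schreier conductor formula in \cite[Proposition 3.7.8]{stichtenoth}. Applying Riemann-Hurwitz to the degree $3$ cover gives $g(\mathcal{C}) = \deg g - 1$, so the arithmetic rank equals $2\deg g - 2$.

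Next I would invoke Theorem \ref{geometric-cubic}, which computes the geometric Mordell-Weil rank as exactly $2\deg g - 2$. Since the arithmetic rank is a priori at most the geometric rank (the inclusion $E_{3,\tau}(\mathbb{F}_{3^{2n}}(t)) \hookrightarrow E_{3,\tau}(\overline{\mathbb{F}}_{3^{2n}}(t))$ is a subgroup inclusion, so torsion-free ranks only grow under base change), equality of the two computed numbers forces the geometric rank to be attained already over $\mathbb{F}_{3^{2n}}(t)$. The statement for $E_{3, \tau^2}$ then follows from the $\mathbb{F}_{3^{2n}}(\mathcal{X})$-isomorphism with $E_{3, \tau}$ established in Proposition \ref{prop:cubic-twists}.

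There is no substantial obstacle here: the proof is essentially a comparison of two explicit formulas, bridged by a textbook Riemann-Hurwitz computation. The only point requiring mild care is the compatibility of the normalization $3 \nmid \deg g$ (obtained by the substitution $\xi \mapsto \xi + c$ with $c \in \mathbb{F}_{3^{2n}}[t]$) with the hypothesis that $\mathcal{C}$ is maximal or minimal over $\mathbb{F}_{3^{2n}}$; but this substitution preserves the Artin-Schreier function field $\mathbb{F}_{3^{2n}}(\mathcal{C})$, hence its genus, its $L$-function, and in particular its (non-)maximality, so all hypotheses transfer without issue.
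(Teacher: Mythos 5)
Your proof is correct and follows essentially the same route as the paper: compute $g(\mathcal{C}) = \deg g - 1$ for the Artin--Schreier cover (the paper cites \cite[Proposition 3.7.8]{stichtenoth} directly rather than redoing Riemann--Hurwitz), specialize Corollary \ref{maximal-cubic} to $\mathcal{X} = \mathbb{P}^1$ to get arithmetic rank $2\deg g - 2$, and compare with the geometric rank $2\deg g - 2$ of Theorem \ref{geometric-cubic}. Your extra remarks on the inequality of ranks under base change and the harmlessness of the normalization $3 \nmid \deg g$ are fine but not a departure from the paper's argument.
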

\begin{proof}
	The genus of an Artin-Schreier curve given by 
	$$\mathcal{C}: w^3 - w = g(t)$$
	is $g(\mathcal{C}) = \deg g - 1$ (see \cite[Proposition 3.7.8]{stichtenoth}). This corollary is now a combination of Theorem \ref{geometric-cubic} and Corollary \ref{maximal-cubic}.
\end{proof}

\section{The other twists}\label{other-twists}
We now discuss the other twists of $E: y^2 = x^3 - x$, namely the quadratic, quartic and sextic ones. Many results here are similar to those of the previous section, hence we will only present minor details regarding the differences between these cases and the cubic twist one.

\vspace{0.3cm}

\noindent{\large \textbf{The quadratic twist}}

\vspace{0.3cm}

\noindent Consider $\Phi_{-1, 0}$ as in Equation \eqref{morphisms-char3} and a curve $\mathcal{D}$ that admits an order $2$ morphism $\tau$. Let $\mathcal{X} = \mathcal{D}/\langle \tau \rangle$. Since $\mathcal{D}$ can be seen as a cyclic degree $2$ covering of $\mathcal{X}$, there exists $f \in \mathbb{F}_{3^{2n}}(\mathcal{X})$ such that $f$ is not a square and $\mathbb{F}_{3^{2n}}(\mathcal{D}) = \mathbb{F}_{3^{2n}}(\mathcal{X})(s)$, where
$$s^2 = f \quad \textrm{and} \quad \tau(s) = -s$$
(see \cite[Proposition 3.7.3]{stichtenoth}). 

\begin{proposition}\label{prop:quadratic-twists}
	Let $\mathcal{X}$ be a curve defined over $\mathbb{F}_{3^{2n}}$ and let $f \in \mathbb{F}_{3^{2n}}(\mathcal{X})$ be a non-square element. The elliptic curve $E / \mathbb{F}_3: y^2 = x^3 - x$ admits the quadratic twist $E_2: \eta^2 = \xi^3 - f^2\xi$ over the function field $\mathbb{F}_{3^{2n}}(\mathcal{X})$.
\end{proposition}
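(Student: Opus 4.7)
The plan is to mimic the construction used for the cubic twists in Section~\ref{cubic-twists}, but using the order $2$ automorphism $\Phi_{-1,0}$ and the order $2$ automorphism $\tau$ of $\mathcal{D}$ coming from the Kummer extension $s^2=f$. First I would consider the combined morphism $\Phi_{-1,0}\times\tau$ acting on $\mathbb{F}_{3^{2n}}(\mathcal{X})(x,y,s)$ as
\[
x\mapsto x,\qquad y\mapsto -y,\qquad s\mapsto -s,
\]
and identify the generic fiber of the elliptic surface $(E\times\mathcal{D})/\langle\Phi_{-1,0}\times\tau\rangle\to\mathcal{X}$ with the fixed subfield $\mathbb{F}_{\tau}$ of this action.

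The next step is to exhibit generators of $\mathbb{F}_{\tau}$. Clearly $x$ is fixed, as is the product $ys$. Thus $\mathbb{F}_{3^{2n}}(\mathcal{X})(x,ys)\subseteq \mathbb{F}_{\tau}\subseteq \mathbb{F}_{3^{2n}}(\mathcal{X})(x,y,s)$. Since
\[
[\mathbb{F}_{3^{2n}}(\mathcal{X})(x,y,s):\mathbb{F}_{3^{2n}}(\mathcal{X})(x,ys)]=2\quad\textrm{and}\quad y\notin \mathbb{F}_{\tau},
\]
both inclusions collapse and $\mathbb{F}_{\tau}=\mathbb{F}_{3^{2n}}(\mathcal{X})(x,ys)$, exactly as in the cubic case. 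To put this in Weierstrass form, I would rescale via $\xi:=fx$ and $\eta:=f\cdot ys$, both of which are fixed by $\Phi_{-1,0}\times\tau$ and still generate $\mathbb{F}_{\tau}$ over $\mathbb{F}_{3^{2n}}(\mathcal{X})$. A direct computation using $y^2=x^3-x$ and $s^2=f$ gives
\[
\eta^2 = f^2\cdot y^2\cdot s^2 = f^3(x^3-x) = (fx)^3-f^2(fx) = \xi^3-f^2\xi,
\]
so the generic fiber is precisely $E_2:\eta^2=\xi^3-f^2\xi$.

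Finally, to justify calling $E_2$ a quadratic twist of $E$, I would display the explicit isomorphism
\[
E\longrightarrow E_2,\qquad (x,y)\longmapsto(fx,\,fsy),
\]
whose inverse is $(\xi,\eta)\mapsto(\xi/f,\eta/(fs))$. This isomorphism is defined over $\mathbb{F}_{3^{2n}}(\mathcal{X})(s)=\mathbb{F}_{3^{2n}}(\mathcal{D})$, a degree $2$ extension of $\mathbb{F}_{3^{2n}}(\mathcal{X})$. To see it cannot be defined over $\mathbb{F}_{3^{2n}}(\mathcal{X})$ itself, note that any $\mathbb{F}_{3^{2n}}(\mathcal{X})$-isomorphism between the two short Weierstrass equations $y^2=x^3-x$ and $\eta^2=\xi^3-f^2\xi$ would have to be of the form $(x,y)\mapsto(\mu^2 x,\mu^3 y)$ with $\mu^4=f^2$, i.e.\ $\mu^2=\pm f$, forcing $\pm f$ to be a square in $\mathbb{F}_{3^{2n}}(\mathcal{X})$; since $-1$ is a square in $\mathbb{F}_{3^{2n}}$ (as $\mathbb{F}_9\subseteq\mathbb{F}_{3^{2n}}$), this contradicts the hypothesis that $f$ is not a square. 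The only step that requires any care is this last non-triviality argument; everything else is a direct transcription of the fixed-field/change-of-variables computation done for the cubic case.
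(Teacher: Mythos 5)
Your proposal is correct and follows essentially the same route as the paper: you compute the fixed field of $\Phi_{-1,0}\times\tau$, and your generators $\xi = fx = s^2x$, $\eta = f\,ys = s^3y$ are literally the paper's change of variables, leading to the same equation $\eta^2=\xi^3-f^2\xi$ and the same isomorphism over $\mathbb{F}_{3^{2n}}(\mathcal{D})=\mathbb{F}_{3^{2n}}(\mathcal{X})(s)$. The only addition is your explicit argument that no isomorphism exists over $\mathbb{F}_{3^{2n}}(\mathcal{X})$ itself, which the paper merely asserts; this is a welcome detail, but be aware that in characteristic $3$ these curves have $j=0$, so an isomorphism of short Weierstrass equations may also involve a translation, $(x,y)\mapsto(\mu^2x+r,\mu^3y)$ with $r^3=f^2r$ (compare the automorphisms $\Phi_{u,t}$ of $E$), not only the scaling you wrote. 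Comparing the coefficients of $x$ still forces $\mu^4=f^2$, i.e.\ $\mu^2=\pm f$, so your contradiction with $f$ (equivalently $-f$, since $-1$ is a square in $\mathbb{F}_{3^{2n}}$) not being a square goes through unchanged.
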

\begin{proof}
	Analogously to the cubic twist cases, we will first compute the fixed subfield $\mathbb{F}$ of $\mathbb{F}_{3^{2n}}(\mathcal{X})(x, y, s)$ under the action of the morphism $\Phi_{-1, 0} \times \tau$. One can check that
	$$\mathbb{F}_{3^{2n}}(\mathcal{X})(s^2x, s^3y) \subseteq \mathbb{F} \subseteq \mathbb{F}_{3^{2n}}(\mathcal{X})(x, y, s).$$
	Since the minimal polynomial of $s$ over $\mathbb{F}_{3^{2n}}(\mathcal{X})(s^2x, s^3y)$ is $T^2 - f$, we conclude that the field extension $\mathbb{F}_{3^{2n}}(\mathcal{X})(x, y, s) / \mathbb{F}_{3^{2n}}(\mathcal{X})(s^2x, s^3y)$ has degree $2$ and, since $s \not\in \mathbb{F}$, we obtain
	$$\mathbb{F} = \mathbb{F}_{3^{2n}}(\mathcal{X})(s^2x, s^3y).$$
	
	Denoting $\xi = s^2x$ and $\eta = s^3y$, the expression of the generic fiber of the elliptic surface given by $(E \times \mathcal{D})/\langle \Phi_{-1,0} \times \tau \rangle \rightarrow \mathcal{X}$ is
	$$E_2: \eta^2 = \xi^3 - f^2\xi.$$
	
	The isomorphism between $E$ and $E_2$ is given by
	$$\begin{array}{ccc}
		E & \cong & E_2 \\
		(x, y) & \mapsto & (s^2x, s^3y) \\
		(s^{-2}\xi, s^{-3}\eta) & \mapsfrom & (\xi, \eta)
	\end{array},$$
	which is defined over $\mathbb{F}_{3^{2n}}(\mathcal{X})(s)$, but not over $\mathbb{F}_{3^{2n}}(\mathcal{X})$, and hence $E_2$ defines a quadratic twist of $E$ in this function field.
\end{proof}

\begin{proposition}\label{rank-quadratic-twist}
	Let $\mathcal{D}$ be a curve defined over $\mathbb{F}_{3^{2n}}$ that admits an order $2$ automorphism $\tau$, $\mathcal{X} = \mathcal{D}/ \langle \tau \rangle$ and $E_2$ be the elliptic curve over $\mathbb{F}_{3^{2n}}(\mathcal{X})$ defined by
	$$E_2: \eta^2 = \xi^3 - f^2\xi.$$
	Denote by $L_{E}(T)$ the $L$-function of the elliptic curve $E$ over $\mathbb{F}_{3^{2n}}$. Writing the $L$-functions of $\mathcal{X}$ and of $\mathcal{D}$ as, respectively,
	$$L_{\mathcal{X}}(T) = L_{E}(T)^{m(\mathcal{X})} \cdot X(T) \quad \textrm{and} \quad L_{\mathcal{D}}(T) = L_{E}(T)^{m(\mathcal{D})} \cdot D(T),$$
	with $\gcd(L_{E}(T), X(T)) = \gcd(L_{E}(T), D(T)) = 1$, then
	$$\textrm{rank}\ E_2(\mathbb{F}_{3^{2n}}(\mathcal{X})) = 4[m(\mathcal{D}) - m(\mathcal{X})].$$
\end{proposition}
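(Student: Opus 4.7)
The plan is to mimic the proof of Theorem \ref{teo:rank-cubic-twist}, replacing the order-$3$ analysis by an involution argument that is actually simpler because both eigenvalues $\pm 1$ already lie in $\mathbb{Q}$, so there is no need to introduce a $\mathbb{Q}(\zeta_3)$-structure on the Mordell--Weil space. I would set $V = E_2(\mathbb{F}_{3^{2n}}(\mathcal{D})) \otimes \mathbb{Q}$ and let $\hat{\tau}$ be the $\mathbb{Q}$-linear map induced on $V$ by the Galois automorphism $\tau$; since $\hat{\tau}^2 = \textrm{id}_V$, one obtains the decomposition
$$V = \textrm{Ker}(\hat{\tau} - \textrm{id}_V) \oplus \textrm{Ker}(\hat{\tau} + \textrm{id}_V),$$
and the goal is to identify each summand with a known Mordell--Weil group over $\mathbb{F}_{3^{2n}}(\mathcal{X})$.

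Writing a point of $E_2$ over $\mathbb{F}_{3^{2n}}(\mathcal{D})$ as $P = (x_0 + x_1 s, y_0 + y_1 s)$ with $x_i, y_j \in \mathbb{F}_{3^{2n}}(\mathcal{X})$, the identity $\tau(s) = -s$ gives $\hat{\tau}(P) = (x_0 - x_1 s, y_0 - y_1 s)$. Requiring $\hat{\tau}(P) = P$ forces $x_1 = y_1 = 0$, so $\textrm{Ker}(\hat{\tau} - \textrm{id}_V) = E_2(\mathbb{F}_{3^{2n}}(\mathcal{X})) \otimes \mathbb{Q}$. Requiring $\hat{\tau}(P) = -P$, where $-P = (\xi, -\eta)$ on $E_2$, forces instead $x_1 = y_0 = 0$, so such a point takes the form $(x_0, y_1 s)$; substituting into $\eta^2 = \xi^3 - f^2\xi$ and using $s^2 = f$ yields $y_1^2 f = x_0^3 - f^2 x_0$, and the change of variables $x_0 = fX$, $y_1 = fY$ transforms this into $Y^2 = X^3 - X$. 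Hence $\textrm{Ker}(\hat{\tau} + \textrm{id}_V) \cong E(\mathbb{F}_{3^{2n}}(\mathcal{X})) \otimes \mathbb{Q}$, and by Theorem \ref{tate-sha-results} its $\mathbb{Q}$-dimension is $4 m(\mathcal{X})$.

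Finally, since $E_2$ and $E$ become isomorphic over $\mathbb{F}_{3^{2n}}(\mathcal{D})$ by Proposition \ref{prop:quadratic-twists}, a second application of Theorem \ref{tate-sha-results} gives $\dim_{\mathbb{Q}} V = \textrm{rank } E(\mathbb{F}_{3^{2n}}(\mathcal{D})) = 4m(\mathcal{D})$. Combining the three equalities yields
$$4m(\mathcal{D}) = \textrm{rank } E_2(\mathbb{F}_{3^{2n}}(\mathcal{X})) + 4 m(\mathcal{X}),$$
which is the desired formula. I do not expect any real obstacle: the only delicate step is the analysis of the $-1$ eigenspace, and it just amounts to spelling out, in coordinates, the standard fact that the quadratic twist of $E$ is trivialized over the quadratic extension $\mathbb{F}_{3^{2n}}(\mathcal{D})/\mathbb{F}_{3^{2n}}(\mathcal{X})$.
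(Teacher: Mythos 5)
Your proposal is correct and follows essentially the same route as the paper's own proof: the $\pm 1$ eigenspace decomposition of $V = E_2(\mathbb{F}_{3^{2n}}(\mathcal{D})) \otimes \mathbb{Q}$ under $\hat{\tau}$, the identification of the $-1$ eigenspace with $E(\mathbb{F}_{3^{2n}}(\mathcal{X})) \otimes \mathbb{Q}$ via the substitution $(x_0, y_1 s)$ and the rescaling by $f$, and the final count $4m(\mathcal{D}) = \textrm{rank}\, E_2(\mathbb{F}_{3^{2n}}(\mathcal{X})) + 4m(\mathcal{X})$ using Theorem \ref{tate-sha-results} twice. Your remark that no $\mathbb{Q}(\zeta_3)$-structure is needed here is accurate and matches what the paper implicitly does.
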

\begin{proof}
	The proof is similar to the cubic twist case. The morphism $\hat{\tau}$ on $V = E_2(\mathbb{F}_{3^{2n}}(\mathcal{D})) \otimes \mathbb{Q}$ induced by $\tau$ is now given by
	$$\hat{\tau}((x_0 + x_1s, y_0 + y_1s) \otimes a) = (x_0 - x_1s, y_0 - y_1s) \otimes a,$$
	where $x_0, x_1, y_0, y_1 \in \mathbb{F}_{3^{2n}}(\mathcal{X})$.
	The space $V$ will be divided into two eigenspaces:
	
	\begin{itemize}
		\item $\textrm{Ker}(\hat{\tau} - \textrm{id}_V)$: Comparing coordinates, we conclude that $0 \neq P \otimes a \in \textrm{Ker}(\hat{\tau} - \textrm{id}_V)$ if, and only if,
		$$P \otimes a = (x_0, y_0) \otimes a,$$
		with $(x_0, y_0) \in E_2(\mathbb{F}_{3^{2n}}(\mathcal{X}))$. Therefore, we obtain
		$$\textrm{Ker}(\hat{\tau} - \textrm{id}_V) = E_2(\mathbb{F}_{3^{2n}}(\mathcal{X})) \otimes \mathbb{Q},$$
		so
		$$\dim_{\mathbb{Q}}\textrm{Ker}(\hat{\tau} - \textrm{id}_V) = \textrm{rank}\ E_2(\mathbb{F}_{3^{2n}}(\mathcal{X})).$$
		
		\item $\textrm{Ker}(\hat{\tau} + \textrm{id}_V)$: Given $P = (x, y) \in E_2$, we have $-P = (x, -y)$ (see \cite[Group Law Algortihm 2.3]{silverman}), therefore, we have $0 \neq P \otimes a \in \textrm{Ker}(\hat{\tau} + \textrm{id}_V)$ if, and only if,
		$$P \otimes a = (x_0, y_1s) \otimes a,$$
		with $(x_0, y_1s) \in E_2(\mathbb{F}_{3^{2n}}(\mathcal{X})(s))$ and $x_0, y_1 \in \mathbb{F}_{3^{2n}}(\mathcal{X})$. This is equivalent to having $(f^{-1}x_0, f^{-1}y_1) \in E(\mathbb{F}_{3^{2n}}(\mathcal{X}))$. Theorem \ref{tate-sha-results} allows us to conclude that
		$$\dim_{\mathbb{Q}}\textrm{Ker}(\hat{\tau} + \textrm{id}_V) = 4m(\mathcal{X}).$$
	\end{itemize}
	
	Finally, since $E$ and $E_2$ are isomorphic over $\mathbb{F}_{3^{2n}}(\mathcal{D})$ and by using Theorem \ref{tate-sha-results}, we conclude that
	$$\begin{array}{rl}
	4m(\mathcal{D}) &= \textrm{rank}\ E(\mathbb{F}_{3^{2n}}(\mathcal{D})) = \textrm{rank}\ E_2(\mathbb{F}_{3^{2n}}(\mathcal{D})) \\ &= \textrm{rank}\ E_2(\mathbb{F}_{3^{2n}}(\mathcal{X})) + 4m(\mathcal{X}).
	\end{array}$$
\end{proof}

\begin{remark}
    The previous proposition extends the groundbreaking construction of Tate and Shafarevich in \cite{TateSha67}, since they considered $\mathcal{X} = \mathbb{P}^1$ (hence the second eigenspace has dimension $0$) and $\mathcal{D}: s^2 = \gamma x^f + \delta$, for $\gamma, \delta \in \mathbb{F}_3^*$ and $f \mid (3^{2m + 1} + 1)$ for some $m$.
\end{remark}

\begin{corollary}
	Let $\mathcal{X}$ be a curve defined over $\mathbb{F}_{3^{2n}}$ and $\mathcal{D}$ be a Kummer extension of $\mathcal{X}$ given by $s^2 = f$, for some non-square element $f \in \mathbb{F}_{3^{2n}}(\mathcal{X})$. If $\mathcal{D}$ is maximal (resp. minimal) over $\mathbb{F}_{3^{2n}}$ for odd (resp. even) $n$ and $g(\mathcal{D})$ and $g(\mathcal{X})$ denote, respectively, the genera of the curves $\mathcal{D}$ and $\mathcal{X}$, then
	$$\textrm{rank }E_2(\mathbb{F}_{3^{2n}}(\mathcal{X})) = 4g(\mathcal{D}) - 4g(\mathcal{X}).$$
\end{corollary}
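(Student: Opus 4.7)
The plan is to mirror the argument used in Corollary \ref{maximal-cubic}, reducing the statement to an explicit identification of $m(\mathcal{X})$ and $m(\mathcal{D})$ with the genera $g(\mathcal{X})$ and $g(\mathcal{D})$, and then plugging into the rank formula from Proposition \ref{rank-quadratic-twist}.

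First I would use the hypothesis that $\mathcal{D}$ is maximal (respectively minimal) over $\mathbb{F}_{3^{2n}}$ together with the fact that $\mathcal{D} \to \mathcal{X}$ is a degree-two cover to conclude, via Serre's theorem on the behaviour of maximality under subcovers (the same reference \cite[Corollary 5.1.17]{stichtenoth} invoked in Corollary \ref{maximal-cubic}), that $\mathcal{X}$ is itself maximal (respectively minimal) over $\mathbb{F}_{3^{2n}}$. This is the only structural step; from here it is a matter of bookkeeping with $L$-functions.

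Next I would write down explicitly the $L$-functions. In the maximal case (odd $n$), Serre gives
$$L_{\mathcal{X}}(T) = (3^n T + 1)^{2g(\mathcal{X})}, \qquad L_{\mathcal{D}}(T) = (3^n T + 1)^{2g(\mathcal{D})},$$
and since in this range $L_E(T) = (3^n T + 1)^2$ (by the formula $L_{E/\mathbb{F}_{3^{2n}}}(T) = (3^n T + (-1)^{n+1})^2$ recalled in Section \ref{preliminaries}), we read off $m(\mathcal{X}) = g(\mathcal{X})$ and $m(\mathcal{D}) = g(\mathcal{D})$ with trivial cofactors $X(T) = D(T) = 1$. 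In the minimal case (even $n$) the sign flips on both sides but the identification of the multiplicities with the genera is identical, since $L_E(T) = (3^n T - 1)^2$ in that range.

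Finally, I would substitute these identifications into the formula
$$\textrm{rank}\ E_2(\mathbb{F}_{3^{2n}}(\mathcal{X})) = 4[m(\mathcal{D}) - m(\mathcal{X})]$$
from Proposition \ref{rank-quadratic-twist} to obtain the desired equality $\textrm{rank}\ E_2(\mathbb{F}_{3^{2n}}(\mathcal{X})) = 4g(\mathcal{D}) - 4g(\mathcal{X})$. There is no real obstacle here; the only subtle point worth double-checking is that Serre's theorem on subcovers of maximal curves genuinely applies to the quotient by $\langle \tau \rangle$, but this is a standard consequence of the fact that the Jacobian of $\mathcal{X}$ is an isogeny factor of the Jacobian of $\mathcal{D}$.
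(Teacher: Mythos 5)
Your proposal is correct and follows essentially the same route as the paper: the paper leaves this corollary without an explicit proof precisely because it is the same argument as in Corollary \ref{maximal-cubic}, namely Serre's theorem to transfer maximality (resp.\ minimality) from $\mathcal{D}$ to $\mathcal{X}$, the resulting identifications $m(\mathcal{X}) = g(\mathcal{X})$ and $m(\mathcal{D}) = g(\mathcal{D})$ from the explicit $L$-functions $(3^nT \pm 1)^{2g}$, and substitution into Proposition \ref{rank-quadratic-twist}. Your sign bookkeeping with $L_E(T) = (3^nT + (-1)^{n+1})^2$ in the two cases is also consistent with the paper.
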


In the special case that $\mathcal{X} = \mathbb{P}^1$, we can study the fibers of the elliptic fibration induced by $E_2$.

\begin{proposition}\label{fiber-quadratic}
	Let $f \in \mathbb{F}_{3^{2n}}[t]$ be a non-square polynomial. Consider the elliptic surface $E_2$ given by
	$$E_2: \eta^2 = \xi^3 - f(t)^2\xi.$$
	Then
	\begin{itemize}
		\item The fiber at $t = \alpha$, $\alpha \in \overline{\mathbb{F}}_{3^{2n}}$, is of type $I_0$ if the multiplicity of $\alpha$ as a root of $f$ is even or $0$;
		\item The fiber at $t = \alpha$, $\alpha \in \overline{\mathbb{F}}_{3^{2n}}$, is of type $I_0^*$ if the multiplicity of $\alpha$ as a root of $f$ is odd;
		\item The fiber at infinity is of type $I_0$ if $\deg f$ is even;
		\item The fiber at infinity is of type $I_0^*$ if $\deg f$ is odd.
	\end{itemize}
\end{proposition}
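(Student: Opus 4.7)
The strategy is to apply Tate's algorithm locally at each point of $\mathbb{P}^1$ that can possibly support a singular fiber. For the Weierstrass equation $\eta^2 = \xi^3 + A\xi + B$ with $A = -f^2$ and $B = 0$, working in characteristic $3$ gives $\Delta = -16(4A^3 + 27B^2) = -A^3$ up to a unit, so $\Delta$ equals $f^6$ up to a unit in $\mathbb{F}_{3^{2n}}[t]$. Hence singular fibers can only occur above zeros of $f$ or above $t = \infty$, and at each such place it will suffice to reduce the equation to a minimal Weierstrass form and read off the Kodaira type.

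\textbf{Fibers over finite points.} Fix $\alpha \in \overline{\mathbb{F}}_{3^{2n}}$ with $f(\alpha) = 0$ and let $v = t - \alpha$, so $f = v^k u$ with $u(\alpha) \neq 0$. If $k = 2m$ is even, the substitution $\xi = v^{2m}\xi'$, $\eta = v^{3m}\eta'$ followed by division of the equation by $v^{6m}$ gives $\eta'^2 = \xi'^3 - u^2\xi'$, whose reduction at $v = 0$ is the smooth curve $\eta'^2 = \xi'^3 - u(\alpha)^2 \xi'$; the fiber is therefore of type $I_0$. If $k = 2m+1$ is odd, the same substitution leads to the minimal model $\eta'^2 = \xi'^3 - v^2 u^2 \xi'$, whose reduction mod $v$ is the cuspidal cubic $\eta'^2 = \xi'^3$. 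With $v(A) = 2$, $v(B) = \infty$ and $v(\Delta) = 6$, Tate's algorithm identifies this as the profile of type $I_0^*$, and this conclusion is independent of the characteristic (the obstructions in Tate's algorithm specific to characteristics $2$ and $3$ appear at the step separating $I_n^*$ from $IV^*$/$II^*$, not here).

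\textbf{Fiber at infinity.} Following the same substitution used in the proof of Proposition \ref{fiber-cubic-twist}, set $s = 1/t$, let $n = \lceil \deg f/2\rceil$, and introduce $\gamma = \xi/t^{2n}$, $\delta = \eta/t^{3n}$. Dividing by $t^{6n}$ the equation becomes
\[
\delta^2 \;=\; \gamma^3 \,-\, \bigl(f/t^{2n}\bigr)^{2}\gamma,
\]
and the quantity $f/t^{2n}$ has $s$-adic valuation $2n - \deg f \in \{0,1\}$. When $\deg f$ is even, $2n = \deg f$ and the reduction at $s = 0$ is smooth, hence type $I_0$; when $\deg f$ is odd, $(f/t^{2n})^2$ has $s$-adic valuation $2$, so the local model is exactly the one analyzed in the odd-multiplicity case above, giving a fiber of type $I_0^*$.

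\textbf{Main obstacle.} The only delicate point is confirming that, in characteristic $3$, the combination $v(A) = 2$, $B = 0$, $v(\Delta) = 6$ genuinely forces $I_0^*$ rather than some quasi-elliptic or wild refinement. This is resolved by observing that the cuspidal reduction $\eta'^2 = \xi'^3$ lies in the first branch of Tate's algorithm where characteristic plays no role, so the classical analysis applies verbatim and the type is $I_0^*$ in all cases.
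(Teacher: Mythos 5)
Your proof is correct and takes essentially the same route as the paper: pass to a minimal Weierstrass model at each zero of $f$ (via $\xi = v^{2m}\xi'$, $\eta = v^{3m}\eta'$) and at infinity (via $s = 1/t$, $\gamma = \xi/t^{2n}$, $\delta = \eta/t^{3n}$), then read off the Kodaira type by Tate's algorithm, with the only difference being that you make explicit the step the paper leaves implicit, namely that the auxiliary cubic $T^3 - u(\alpha)^2T$ has distinct roots in characteristic $3$, forcing $I_0^*$. The one minor imprecision is your blanket remark that characteristic-$2,3$ subtleties enter Tate's algorithm only at the $I_n^*$ versus $IV^*$/$II^*$ stage — in fact the algorithm itself is characteristic-free and what fails in small characteristic are the shortcut classifications via $v(c_4)$, $v(c_6)$, $v(\Delta)$ — but since you verify the separability of the auxiliary cubic directly, the conclusion is sound.
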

\begin{proof}
	We are considering that $E_2$ defines an elliptic fibration over $\mathbb{P}^1(\overline{\mathbb{F}}_{3^{2n}})$. We then need to study the fibers associated to the prime ideals of $\overline{\mathbb{F}}_{3^{2n}}(t)$, that is, $\pi = (t - \alpha)$, for $\alpha \in \overline{\mathbb{F}}_{3^{2n}}$, or $\pi = \frac{1}{t}$. 
	
	If we fix $\alpha \in \overline{\mathbb{F}}_{3^{2n}}$ (and so $\pi = (t - \alpha)$) and we write $f(t) = \pi^{2m + k}\tilde{f}(t)$, with $k \in \{0, 1\}$ and $\pi \nmid \tilde{f}(t)$, we can change the defining equation of $E_2$ into the minimal Weierstrass equation
	$$y^2 = x^3 - \pi^{2k}\tilde{f}^2x$$
	by defining $\eta = \pi^{3m}y$ and $\xi = \pi^{2m}x$. In order to study the fiber associated to $\pi = \frac{1}{t}$, we define $s = \frac{1}{t}$, $n = \lceil \frac{\deg f}{2} \rceil$, $y = \eta t^{-3n}$ and $x = \xi t^{-2n}$. By dividing the defining equation of $E_2$ by $t^{6n}$, we obtain
	$$y^2 = x^3 - s^{4n - 2\deg f} \tilde{f}(s)x.$$
	
	The result now follows by a direct application of Tate's algorithm.
\end{proof}

\vspace{0.3cm}

\noindent{\large \textbf{The quartic twists}}

\vspace{0.3cm}

\noindent Let $i \in \mathbb{F}_9$ be such that $i^2 = -1$ and $t \in \mathbb{F}_3$, and consider $\Phi_{i, t}$ as in Equation \eqref{morphisms-char3} and a curve $\mathcal{G}$ defined over $\mathbb{F}_{3^{2n}}$ admitting an order $4$ automorphism $\tau$. Let $\mathcal{X} = \mathcal{G} / \langle \tau \rangle$. The curve $\mathcal{G}$ can be seen as a cyclic degree $4$ covering of $\mathcal{X}$. Since $3 \nmid 4$, there exists $f \in \mathbb{F}_{3^{2n}}(\mathcal{X})$ such that $f$ is not a square in $\mathbb{F}_{3^{2n}}(\mathcal{X})$ (in particular, $f$ is not a fourth-power of any element) and $\mathbb{F}_{3^{2n}}(\mathcal{G}) = \mathbb{F}_{3^{2n}}(\mathcal{X})(r)$, with
$$r^4 = f \quad \textrm{and} \quad \tau(r) = -ir.$$

\begin{proposition}\label{prop:quartic-twists}
	Let $\mathcal{X}$ be a curve defined over $\mathbb{F}_{3^{2n}}$ and let $f \in \mathbb{F}_{3^{2n}}(\mathcal{X})$ be such that $f$ is not a square in $\mathbb{F}_{3^{2n}}(\mathcal{X})$. The elliptic curve $E / \mathbb{F}_3: y^2 = x^3 - x$ admits the quartic twists 
	$$E_{4, \tau}: \eta^2 = \xi^3 - f\xi \quad \textrm{and} \quad E_{4, \tau^3}: \eta^2 = \xi^3 - f^3\xi$$ 
	over the function field $\mathbb{F}_{3^{2n}}(\mathcal{X})$.
\end{proposition}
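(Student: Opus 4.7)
The plan is to follow the template of Propositions \ref{prop:quadratic-twists} and \ref{prop:cubic-twists}: compute the fixed subfield of $\mathbb{F}_{3^{2n}}(\mathcal{X})(x,y,r)$ under each of the order-$4$ automorphisms $\Phi_{i,0}\times\tau$ and $\Phi_{i,0}\times\tau^3$, and read off the generic fiber of the associated elliptic fibration over $\mathcal{X} = \mathcal{G}/\langle\tau\rangle$. For $\Phi_{i,0}\times\tau$, one has $x\mapsto -x$, $y\mapsto -iy$ and $r\mapsto -ir$, so that $r^2\mapsto -r^2$ and $r^3\mapsto ir^3$. Hence $\xi:=r^2x$ and $\eta:=r^3y$ are both invariant, and the inclusion $\mathbb{F}_{3^{2n}}(\mathcal{X})(\xi,\eta)\subseteq\mathbb{F}_\tau$ holds.

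To establish equality, I would use a degree count. Since the group has order $4$, one has $[\mathbb{F}_{3^{2n}}(\mathcal{X})(x,y,r):\mathbb{F}_\tau]=4$, so it suffices to show that $[\mathbb{F}_{3^{2n}}(\mathcal{X})(x,y,r):\mathbb{F}_{3^{2n}}(\mathcal{X})(\xi,\eta)]=4$. Since $x=\xi/r^2$ and $y=\eta/r^3$, we have $\mathbb{F}_{3^{2n}}(\mathcal{X})(x,y,r)=\mathbb{F}_{3^{2n}}(\mathcal{X})(\xi,\eta)(r)$, so the degree equals that of the minimal polynomial of $r$, whose candidate is $T^4-f$. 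The hypothesis that $f$ is not a square in $\mathbb{F}_{3^{2n}}(\mathcal{X})$ transfers to $\mathbb{F}_{3^{2n}}(\mathcal{X})(\xi,\eta)$ (a function on the elliptic curve $\eta^2=\xi^3-f\xi$ whose square equals the constant $f$ must itself be constant, hence a square root would live already in $\mathbb{F}_{3^{2n}}(\mathcal{X})$); combined with the fact that $-1$ is a square in $\mathbb{F}_9\subseteq\mathbb{F}_{3^{2n}}$, which eliminates the second obstruction of Kummer theory, this makes $T^4-f$ irreducible and gives the required degree~$4$. Substituting $x=\xi/r^2$ and $y=\eta/r^3$ into $y^2=x^3-x$ and clearing $r^6$ then yields $\eta^2=\xi^3-f\xi$, which is the equation of $E_{4,\tau}$.

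The case $\Phi_{i,0}\times\tau^3$ is entirely analogous: now $r\mapsto ir$, so $r^6\mapsto -r^6$ and $r^9\mapsto ir^9$, and the invariants $\xi:=r^6x$, $\eta:=r^9y$ satisfy $\eta^2=\xi^3-f^3\xi$ after clearing $r^{18}$. To confirm that these really are quartic twists, I would observe that any isomorphism between short Weierstrass models $y^2=x^3+ax$ and $y^2=x^3+a'x$ has the shape $(x,y)\mapsto(u^2x,u^3y)$ with $u^4 a=a'$; for $E\to E_{4,\tau}$ and $E\to E_{4,\tau^3}$ this forces $u^4=f$ and $u^4=f^3$ respectively. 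Since $f$ is not a square in $\mathbb{F}_{3^{2n}}(\mathcal{X})$, and since $\gcd(2,3)=1$ implies that $f^3$ is a square if and only if $f$ is, in both cases $u$ generates an extension of degree exactly $4$, so the twists do not descend to any proper subextension of $\mathbb{F}_{3^{2n}}(\mathcal{X})(r)$.

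The main obstacle is the fixed-field identification in the second paragraph: unlike the prime-order quotients treated in Propositions \ref{prop:quadratic-twists} and \ref{prop:cubic-twists}, here an order-$4$ group acts on a degree-$4$ tower and one must explicitly rule out a larger fixed field (corresponding to the unique subgroup of order $2$). The key input is the non-squareness of $f$ together with the presence of $i\in\mathbb{F}_9$, which conspire to force $T^4-f$ to remain irreducible over the fixed field and thereby pin down the index.
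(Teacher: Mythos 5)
Your proposal is correct and follows essentially the same route as the paper: you form the invariants $\xi = r^2x$, $\eta = r^3y$ (the paper uses $\xi = r^2(x+t)$, $\eta = r^3y$ with $t \in \mathbb{F}_3$), identify the fixed field by a degree count (the paper exhibits the basis $\{1, r, r^2, r^3\}$ of the extension, while you invoke Artin's theorem together with the irreducibility of $T^4 - f$ over $\mathbb{F}_{3^{2n}}(\mathcal{X})(\xi,\eta)$ via the constant-field argument --- the same fact in slightly different clothing), and then read off $\eta^2 = \xi^3 - f\xi$ and $\eta^2 = \xi^3 - f^3\xi$, with the explicit isomorphism over the degree-$4$ extension $\mathbb{F}_{3^{2n}}(\mathcal{X})(r)$. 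One minor caveat: in characteristic $3$ an isomorphism between $y^2 = x^3 + ax$ and $y^2 = x^3 + a'x$ need not have the shape $(x,y)\mapsto(u^2x,u^3y)$ --- translations $(x,y)\mapsto(u^2x + r, u^3y)$ with $r^3 + ar = 0$ also preserve the short form --- but since such $r$ do not change the coefficient of $\xi$, the relation $u^4 a = a'$ is still forced and your minimality argument (which the paper states but does not spell out) goes through.
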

\begin{proof}
	We have
	$$\Phi_{i, t} \times \tau: (x, y, r) \mapsto (-x + t, -iy, -ir).$$
	Denoting by $\mathbb{F}$ the fixed subfield of $\mathbb{F}_{3^{2n}}(\mathcal{X})(x, y, r)$ by $\Phi_{i, t} \times \tau$, we have
	$$\mathbb{F}_{3^{2n}}(\mathcal{X})(r^2(x + t), r^3y) \subseteq \mathbb{F}.$$
	One can check that equality holds by noting that the field extension
	$$\mathbb{F}_{3^{2n}}(\mathcal{X})(x, y, r) / \mathbb{F}_{3^{2n}}(\mathcal{X})(r^2(x + t), r^3y)$$
	has basis $\{1, r, r^2, r^3\}$ and compute the image of these elements under $\Phi_{i, t} \times \tau$. By denoting $\xi = r^2(x + t)$ and $\eta = r^3y$, we conclude that the generic fiber of the elliptic surface $(E \times \mathcal{C}) / \langle \Phi_{i, t} \times \tau \rangle \rightarrow \mathcal{X}$ is given by
	$$E_{4, \tau}: \eta^2 = \xi^3 - f\xi.$$
	
	An analogous procedure with the morphisms $\Phi_{i, t} \times \tau^3$, $\Phi_{-i, t} \times \tau$ and $\Phi_{-i, t} \times \tau^3$, which also have order $4$, yield, respectively, the elliptic curves
	$$E_{4, \tau^3}: \eta^2 = \xi^3 - f^3\xi,$$
	$E_{4, \tau^3}$ and $E_{4, \tau}$.
	
	The isomorphisms between $E$ and these elliptic curves are given by
	$$\begin{array}{ccc}
		E & \cong & E_{4, \tau} \\
		(x, y) & \mapsto & (r^2(x + t), r^3y) \\
		(r^{-2}\xi - t, r^{-3}\eta) & \mapsfrom & (\xi, \eta)
	\end{array}$$
	and
	$$\begin{array}{ccc}
		E & \cong & E_{4, \tau^3} \\
		(x, y) & \mapsto & (r^6(x + t), r^9y) \\
		(r^{-6}\xi - t, r^{-9}\eta) & \mapsfrom & (\xi, \eta)
	\end{array},
	$$
	and they are defined over a degree $4$ extension of $\mathbb{F}_{3^{2n}}(\mathcal{X})$.
\end{proof}

\begin{proposition}
	Let $\mathcal{G}$ be a curve defined over $\mathbb{F}_{3^{2n}}$ that admits an order $4$ automorphism $\tau$, $\mathcal{X} = \mathcal{G}/ \langle \tau \rangle$, $\mathcal{D} = \mathcal{G}/ \langle \tau^2 \rangle$ and $E_{4, \tau}$ and $E_{4, \tau^3}$ be the elliptic curves over $\mathbb{F}_{3^{2n}}(\mathcal{X})$ defined by
	$$E_{4, \tau}: \eta^2 = \xi^3 - f\xi \quad \textrm{and} \quad  E_{4, \tau^3}: \eta^2 = \xi^3 - f^3\xi.$$
	Denote by $L_{E}(T)$ the $L$-function of the elliptic curve $E$ over $\mathbb{F}_{3^{2n}}$ and write the $L$-functions of $\mathcal{X}$, of $\mathcal{D}$ and of $\mathcal{G}$ as, respectively,
	$$L_{\mathcal{X}}(T) = L_{E}(T)^{m(\mathcal{X})} \cdot X(T), \quad L_{\mathcal{D}}(T) = L_{E}(T)^{m(\mathcal{D})} \cdot D(T)$$
	and
	$$L_{\mathcal{G}}(T) = L_{E}(T)^{m(\mathcal{G})} \cdot G(T),$$
	with 
	$$\gcd(L_{E}(T), X(T)) = \gcd(L_{E}(T), D(T)) = \gcd(L_{E}(T), G(T)) = 1,$$
	then 
	$$\textrm{rank}\ E_{4, \tau}(\mathbb{F}_{3^{2n}}(\mathcal{X})) = \textrm{rank}\ E_{4, \tau^3}(\mathbb{F}_{3^{2n}}(\mathcal{X})) = 2[m(\mathcal{G}) - m(\mathcal{D})].$$
\end{proposition}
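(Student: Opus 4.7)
The plan is to mirror the strategies of Theorem \ref{teo:rank-cubic-twist} and Proposition \ref{rank-quadratic-twist}. I would set $V = E_{4, \tau}(\mathbb{F}_{3^{2n}}(\mathcal{G})) \otimes \mathbb{Q}$ and equip it with a $\mathbb{Q}(i)$-module structure via the automorphism $\rho(\xi, \eta) = (-\xi, i\eta)$ of $E_{4, \tau}$, which is defined over $\mathbb{F}_{3^{2n}}(\mathcal{X})$ because $i \in \mathbb{F}_9 \subseteq \mathbb{F}_{3^{2n}}$ and satisfies $\rho^2 = [-1]$. Since $\hat{\tau}$ commutes with $\rho$, the induced Galois operator is $\mathbb{Q}(i)$-linear with $\hat{\tau}^4 = \textrm{id}_V$, which gives the eigenspace decomposition
$$V = V_1 \oplus V_{-1} \oplus V_i \oplus V_{-i}.$$
Moreover, because $E_{4, \tau} \cong E$ over $\mathbb{F}_{3^{2n}}(\mathcal{G})$, Theorem \ref{tate-sha-results} gives $\dim_{\mathbb{Q}} V = 4m(\mathcal{G})$.

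Next, I would identify each eigenspace by expanding a general point as $(\xi, \eta) = (\sum_{k=0}^{3} \xi_k r^k, \sum_{k=0}^{3} \eta_k r^k)$ with $\xi_k, \eta_k \in \mathbb{F}_{3^{2n}}(\mathcal{X})$, using $\tau(r) = -ir$, and matching coefficients. The outcome I expect is: $V_1 \cong E_{4,\tau}(\mathbb{F}_{3^{2n}}(\mathcal{X})) \otimes \mathbb{Q}$; the space $V_i$ consists of points of the form $(\xi_2 r^2, \eta_3 r^3)$ satisfying (after using $r^4 = f$) the equation $\eta_3^2 = \xi_2^3 - \xi_2$, so $V_i \cong E(\mathbb{F}_{3^{2n}}(\mathcal{X})) \otimes \mathbb{Q}$ has $\mathbb{Q}$-dimension $4m(\mathcal{X})$ by Theorem \ref{tate-sha-results}; the space $V_{-i}$ consists of $(\xi_2 r^2, \eta_1 r)$ satisfying $\eta_1^2 = f(\xi_2^3 - \xi_2)$, which after $X = f\xi_2$, $Y = f\eta_1$ becomes $E_2$, contributing $4[m(\mathcal{D}) - m(\mathcal{X})]$ by Proposition \ref{rank-quadratic-twist}; and $V_{-1}$ consists of $(\xi_0, \eta_2 r^2)$ satisfying $f\eta_2^2 = \xi_0^3 - f\xi_0$, which after $X = f\xi_0$, $Y = f^2 \eta_2$ becomes $E_{4, \tau^3}$, so $V_{-1} \cong E_{4, \tau^3}(\mathbb{F}_{3^{2n}}(\mathcal{X})) \otimes \mathbb{Q}$. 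Summing the four $\mathbb{Q}$-dimensions would yield
$$\textrm{rank}\ E_{4, \tau}(\mathbb{F}_{3^{2n}}(\mathcal{X})) + \textrm{rank}\ E_{4, \tau^3}(\mathbb{F}_{3^{2n}}(\mathcal{X})) = 4[m(\mathcal{G}) - m(\mathcal{D})].$$

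The hard part will be closing the gap between this sum and the claimed equality of the two ranks. In contrast to Proposition \ref{prop:cubic-twists}, no direct $\mathbb{F}_{3^{2n}}(\mathcal{X})$-isomorphism between $E_{4, \tau}$ and $E_{4, \tau^3}$ exists, since any isomorphism in short Weierstrass form would force $\sqrt{\pm f}$ to lie in the base field, contradicting the hypothesis on $f$. To finish, I would invoke the characteristic-$3$ relative Frobenius
$$F \colon E_{4,\tau} \longrightarrow E_{4,\tau}^{(3)}, \qquad (\xi, \eta) \longmapsto (\xi^3, \eta^3),$$
which is an $\mathbb{F}_{3^{2n}}(\mathcal{X})$-isogeny of degree $3$, and observe that cubing the coefficients of $\eta^2 = \xi^3 - f\xi$ produces exactly $\eta^2 = \xi^3 - f^3 \xi$, so that $E_{4, \tau}^{(3)} = E_{4, \tau^3}$. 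Since isogenous elliptic curves over a field have equal Mordell-Weil rank, this gives the missing equality $\textrm{rank}\ E_{4, \tau}(\mathbb{F}_{3^{2n}}(\mathcal{X})) = \textrm{rank}\ E_{4, \tau^3}(\mathbb{F}_{3^{2n}}(\mathcal{X}))$, and combining with the sum forces each of them to equal $2[m(\mathcal{G}) - m(\mathcal{D})]$.
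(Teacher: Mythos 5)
Your proposal is correct and follows essentially the same route as the paper: the same $\mathbb{Q}(i)$-structure on $E_{4,\tau}(\mathbb{F}_{3^{2n}}(\mathcal{G}))\otimes\mathbb{Q}$, the same four-eigenspace decomposition with identifications $E_{4,\tau}$, $E_{4,\tau^3}$, $E_2$ and $E$ (your labels of the $\pm i$ eigenspaces differ only by the sign convention in $\rho$ and $\tau(r)$), and the same final step of comparing the ranks of $E_{4,\tau}$ and $E_{4,\tau^3}$ via the characteristic-$3$ Frobenius isogeny $(\xi,\eta)\mapsto(\xi^3,\eta^3)$.
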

\begin{proof}
	The morphism $\Phi_{i,0}: (\xi, \eta) \mapsto (-\xi, -i\eta)$ can be seen as an order $4$ morphism of $E_{4, \tau}$. Since $\Phi_{i, 0} \circ \tau = \tau \circ \Phi_{i, 0}$, we can see $V = E_{4,\tau}(\mathbb{F}_{3^{2n}}(\mathcal{G})) \otimes \mathbb{Q}$ as an $\mathbb{Q}(i)$-vector space, by defining
	$$i \cdot (P \otimes a) = \Phi_{i, 0}(P) \otimes a.$$
	The space $V$ can be divided into $4$ eigenspaces by considering the morphism $\hat{\tau}$ induced by $\tau$.
	
	\begin{itemize}
		\item $\textrm{Ker}(\hat{\tau} - \textrm{id}_V)$: Comparing coordinates, one sees that $0 \neq P \otimes a \in \textrm{Ker}(\hat{\tau} - \textrm{id}_V)$ if, and only if,
		$$P \otimes a = (x, y) \otimes a,$$
		with $(x, y) \in E_{4, \tau}(\mathbb{F}_{3^{2n}}(\mathcal{X}))$. Therefore, one obtains
		$$\textrm{Ker}(\hat{\tau} - \textrm{id}_V) = E_{4, \tau}(\mathbb{F}_{3^{2n}}(\mathcal{X})) \otimes \mathbb{Q},$$
		so
		$$\dim_{\mathbb{Q}} \textrm{Ker}(\hat{\tau} - \textrm{id}_V) = \textrm{rank }E_{4, \tau}(\mathbb{F}_{3^{2n}}(\mathcal{X})).$$
		
		\item $\textrm{Ker}(\hat{\tau} + \textrm{id}_V)$: In this case, $P \otimes a$ is in this kernel if, and only if, $P = (x, yr^2)$, with $(x, y) \in E_{4, \tau}(\mathbb{F}_{3^{2n}}(\mathcal{X}))$. Substituting in the equation of $E_{4, \tau}$ shows that this is equivalent to having $(fx, f^2y) \in E_{4, \tau^3}(\mathbb{F}_{3^{2n}}(\mathcal{X}))$. Therefore,
		$$\dim_{\mathbb{Q}} \textrm{Ker}(\hat{\tau} + \textrm{id}_V) = \textrm{rank }E_{4, \tau^3}(\mathbb{F}_{3^{2n}}(\mathcal{X})).$$
		Since the characteristic of $\mathbb{F}_{3^{2n}}(\mathcal{X})$ is $3$, we have a rational isogeny between $E_{4, \tau}$ and $E_{4, \tau^3}$ given by the Frobenius morphism $(x, y) \mapsto (x^3, y^3)$. This implies that $\textrm{rank }E_{4, \tau}(\mathbb{F}_{3^{2n}}(\mathcal{X}))$ and $\textrm{rank }E_{4, \tau^3}(\mathbb{F}_{3^{2n}}(\mathcal{X}))$ are the same, so 
		$$\dim_{\mathbb{Q}} \textrm{Ker}(\hat{\tau} - \textrm{id}_V) = \dim_{\mathbb{Q}} \textrm{Ker}(\hat{\tau} + \textrm{id}_V).$$
		
		\item $\textrm{Ker}(\hat{\tau} - i \cdot \textrm{id}_V)$: We now conclude
		$$P \otimes a \in \textrm{Ker}(\hat{\tau} - i \cdot \textrm{id}_V) \Longleftrightarrow P \otimes a = (xr^2,yr) \otimes a,$$
		with $(x, y) \in E_{4, \tau}(\mathbb{F}_{3^{2n}}(\mathcal{X}))$. This is equivalent to having $(fx, fy) \in E_{\mathcal{D}}$, the elliptic curve given by
		$$E_{\mathcal{D}}: \eta = \xi^3 - f^2\xi.$$
		Let $s = r^2$. Then the curve $\mathcal{D}$ whose function field is given by $\mathbb{F}_{3^{2n}}(\mathcal{X})(s)$ (with $s^2 = f$) can be seen as a degree $2$ extension of $\mathcal{X}$, and so $E_{\mathcal{D}}$ is a quadratic twist of $E$ (indeed, this is the same elliptic curve obtained in Proposition \ref{prop:quadratic-twists}). Therefore, from Proposition \ref{rank-quadratic-twist},
		$$\dim_{\mathbb{Q}} \textrm{Ker}(\hat{\tau} - i \cdot \textrm{id}_V) = \textrm{rank }E_{\mathcal{D}}(\mathbb{F}_{3^{2n}}(\mathcal{X})) = 4m(\mathcal{D}) - 4m(\mathcal{X}).$$
		
		\item $\textrm{Ker}(\hat{\tau} + i \cdot \textrm{id}_V)$: Finally, we have
		$$P \otimes a \in \textrm{Ker}(\hat{\tau} + i \cdot  \textrm{id}_V) \Longleftrightarrow P \otimes a = (xr^2, yr^3) \otimes a,$$
		with $(x, y) \in E_{4, \tau}(\mathbb{F}_{3^{2n}}(\mathcal{X}))$. This is equivalent to having $(x, y) \in E$. Therefore,
		$$\dim_{\mathbb{Q}} \textrm{Ker}(\hat{\tau} + i \cdot \textrm{id}_V) = \textrm{rank }E(\mathbb{F}_{3^{2n}}(\mathcal{X})) = 4m(\mathcal{X}).$$
	\end{itemize}
	
	The result now follows by observing that, as before,  $\dim V = 4m(\mathcal{G})$.
\end{proof}

\begin{remark}
    The previous proposition is an extension of the results in \cite{BTT}, since the authors considered $\mathcal{X} = \mathbb{P}^1$ and $\mathcal{G}$ a maximal curve.
\end{remark}

\begin{corollary}
	Let $\mathcal{X}$ be a curve defined over $\mathbb{F}_{3^{2n}}$ and $\mathcal{G}$ be a Kummer extension of $\mathcal{X}$ given by $r^4 = f$, for some non-square element $f \in \mathbb{F}_{3^{2n}}(\mathcal{X})$. Let $\mathcal{D}$ be a Kummer extension of $\mathcal{X}$ given by $s^2 = f$. If $\mathcal{G}$ is maximal (resp. minimal) over $\mathbb{F}_{3^{2n}}$ for odd (resp. even) $n$ and $g(\mathcal{G})$ and $g(\mathcal{D})$ denote, respectively, the genera of the curves $\mathcal{G}$ and $\mathcal{D}$, then
	$$\textrm{rank}\ E_{4, \tau}(\mathbb{F}_{3^{2n}}(\mathcal{X})) = \textrm{rank}\ E_{4, \tau^3}(\mathbb{F}_{3^{2n}}(\mathcal{X})) = 2g(\mathcal{G}) - 2g(\mathcal{D}).$$
\end{corollary}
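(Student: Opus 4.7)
The plan is to reduce this corollary to the preceding proposition by evaluating $m(\mathcal{G})$ and $m(\mathcal{D})$ under the maximality (or minimality) hypothesis, in complete parallel with the argument used for Corollary \ref{maximal-cubic}.

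First, I would observe that $\mathcal{D} = \mathcal{G}/\langle \tau^2 \rangle$ is a quotient of $\mathcal{G}$ by an order $2$ automorphism, so the Jacobian of $\mathcal{D}$ appears, up to isogeny, as a factor of the Jacobian of $\mathcal{G}$. By Serre's theorem for maximal curves (a consequence of \cite[Corollary 5.1.17]{stichtenoth}), the maximality (respectively, minimality) of $\mathcal{G}$ over $\mathbb{F}_{3^{2n}}$ for odd (respectively, even) $n$ is therefore inherited by $\mathcal{D}$.

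Next, I would invoke the explicit shape of $L_{E/\mathbb{F}_{3^{2n}}}(T) = (3^nT + (-1)^{n+1})^2$ computed in Section \ref{preliminaries}, together with the standard fact that a maximal (respectively, minimal) curve $Y$ of genus $g(Y)$ over $\mathbb{F}_{3^{2n}}$ has $L$-function $(3^nT + (-1)^{n+1})^{2g(Y)}$. Applying this to both $\mathcal{G}$ and $\mathcal{D}$ gives
$$L_{\mathcal{G}}(T) = L_E(T)^{g(\mathcal{G})} \quad \textrm{and} \quad L_{\mathcal{D}}(T) = L_E(T)^{g(\mathcal{D})},$$
so $m(\mathcal{G}) = g(\mathcal{G})$ and $m(\mathcal{D}) = g(\mathcal{D})$, while the cofactors $G(T)$ and $D(T)$ are trivial (and in particular coprime to $L_E$). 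Substituting into the rank formula $\textrm{rank}\,E_{4,\tau}(\mathbb{F}_{3^{2n}}(\mathcal{X})) = \textrm{rank}\,E_{4,\tau^3}(\mathbb{F}_{3^{2n}}(\mathcal{X})) = 2[m(\mathcal{G}) - m(\mathcal{D})]$ furnished by the preceding proposition yields the desired identity.

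The only delicate step is the transfer of maximality from $\mathcal{G}$ to its quotient $\mathcal{D}$, but as in the cubic case this is a standard consequence of the decomposition of Jacobians under finite quotients, so no calculation beyond citing the relevant results is needed.
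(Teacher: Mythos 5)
Your proposal is correct and follows essentially the argument the paper intends (the paper leaves this corollary unproved, but its proof of the cubic analogue, Corollary \ref{maximal-cubic}, is exactly this): maximality/minimality of $\mathcal{G}$ passes to the quotient $\mathcal{D}=\mathcal{G}/\langle\tau^2\rangle$ by Serre's theorem, forcing $L_{\mathcal{G}}(T)=L_E(T)^{g(\mathcal{G})}$ and $L_{\mathcal{D}}(T)=L_E(T)^{g(\mathcal{D})}$, i.e.\ $m(\mathcal{G})=g(\mathcal{G})$ and $m(\mathcal{D})=g(\mathcal{D})$, and the preceding proposition gives the stated rank.
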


The proofs of the following propositions are a direct application of Tate's algorithm, after considering some changes that are analogous to those we used in Proposition \ref{fiber-quadratic}, and so they are omitted.

\begin{proposition}
	Let $f \in \mathbb{F}_{3^{2n}}[t]$ be a non-square polynomial. Consider the elliptic surface $E_{4, \tau}$ given by
	$$E_{4, \tau}: \eta^2 = \xi^3 - f\xi.$$
	For $\alpha \in \overline{\mathbb{F}}_{3^{2n}}$, write $f(t) = (t - \alpha)^{4m + k}\tilde{f}(t)$, where $\tilde{f}(\alpha) \neq 0$ and $k \in \{0, 1, 2, 3\}$. For the fiber at infinity, we define
	$$\left\lceil \frac{\deg f}{4} \right\rceil = \frac{\deg f + k}{4}.$$
	The fibers at $\alpha$ and at $\infty$ are given in the following table.
	
	\begin{center}
		\begin{tabular}{c|c}
			$k$ & Fiber type \\
			\hline
			$0$ & $I_0$ \\
			$1$ & $III$ \\
			$2$ & $I_0^*$ \\
			$3$ & $III^*$
		\end{tabular}
	\end{center}
\end{proposition}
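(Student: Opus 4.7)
The plan is to follow exactly the strategy used in Proposition \ref{fiber-quadratic}: reduce the analysis at each place of $\mathbb{F}_{3^{2n}}(t)$ to a single uniform local Weierstrass shape, and then apply Tate's algorithm case-by-case on the resulting exponent $k\in\{0,1,2,3\}$.

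First I would handle the finite places. Fix $\alpha\in\overline{\mathbb{F}}_{3^{2n}}$, set $\pi=t-\alpha$, and write $f(t)=\pi^{4m+k}\tilde f(t)$ with $\tilde f(\alpha)\neq 0$ and $k\in\{0,1,2,3\}$. The change of variables $\xi=\pi^{2m}x$, $\eta=\pi^{3m}y$, after dividing the equation of $E_{4,\tau}$ by $\pi^{6m}$, yields
$$y^2 = x^3 - \pi^k\,\tilde f(t)\,x,$$
which is minimal at $\pi$. For the place at infinity, setting $s=1/t$, $\xi=t^{2n}x$, $\eta=t^{3n}y$ with $n=\lceil\deg f/4\rceil$ and dividing by $t^{6n}$ reduces the problem to an equation of the same shape
$$y^2 = x^3 - s^{4n-\deg f}\,\tilde f(s)\,x,$$
with exponent $k := 4n-\deg f\in\{0,1,2,3\}$. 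Thus both kinds of places lead to a single common local problem parameterized by $k$.

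Next I would run Tate's algorithm on $y^2 = x^3 - \pi^k u\,x$ with $u$ a local unit, for each value of $k$. For $k=0$ the discriminant is a unit and the fiber is smooth of type $I_0$. For $k=1$ the reduction at $\pi$ is a cusp, one computes $v_\pi(b_8)=2<3$, and Tate's algorithm terminates immediately at type $III$. For $k=2$, the auxiliary cubic $T^3+a_{2,1}T^2+a_{4,2}T+a_{6,3}$ appearing in Tate's algorithm reduces to $T(T^2-u(\alpha))$; since $u(\alpha)\neq 0$ this splits into three distinct roots over $\overline{\mathbb{F}}_{3^{2n}}$, giving type $I_0^*$. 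For $k=3$ the same cubic reduces to $T^3$ (a triple root), and continuing Tate's algorithm past this step identifies the fiber as $III^*$.

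The main, and really only, genuine obstacle is the verification of the $k=2$ case: one must show that the reduced cubic in Tate's step has three distinct roots and not a repeated one, which is precisely where the hypothesis $\tilde f(\alpha)\neq 0$ is used. Without this, the fiber type could degenerate to $IV^*$ or worse and the table would fail. The remaining cases are routine and essentially parallel the computations already carried out in Proposition \ref{fiber-quadratic}; for this reason, and in line with the author's remark preceding the proposition, the full details can be omitted without loss of clarity.
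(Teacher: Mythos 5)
Your proposal is correct and follows exactly the route the paper intends (and omits): the substitutions $\xi=\pi^{2m}x$, $\eta=\pi^{3m}y$ at finite places and $s=1/t$, $\xi=t^{2n}x$, $\eta=t^{3n}y$ at infinity reduce everything to the minimal local model $y^2=x^3-\pi^k u\,x$ with $u$ a unit, after which Tate's algorithm gives $I_0$, $III$, $I_0^*$, $III^*$ for $k=0,1,2,3$ just as you compute, including the correct use of $\tilde f(\alpha)\neq 0$ to get three distinct roots of the auxiliary cubic when $k=2$.
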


\begin{proposition}
	Let $f \in \mathbb{F}_{3^{2n}}[t]$ be a non-square polynomial. Consider the elliptic surface $E_{4, \tau^3}$ given by
	$$E_{4, \tau}: \eta^2 = \xi^3 - f^3\xi.$$
	For $\alpha \in \overline{\mathbb{F}}_{3^{2n}}$, write $f(t) = (t - \alpha)^{\ell}\tilde{f}(t)$, where $\tilde{f}(\alpha) \neq 0$, and define $k = 3\ell - 4\lfloor \frac{3\ell}{4} \rfloor$ (in particular, $k \in \{0, 1, 2, 3\}$). For the fiber at infinity, we define
	$$\left\lceil \frac{3 \deg f}{4} \right\rceil = \frac{3 \deg f + k}{4}.$$
	The fibers at $\alpha$ and at $\infty$ are given in the following table.
	
	\begin{center}
		\begin{tabular}{c|c}
			$k$ & Fiber type \\
			\hline
			$0$ & $I_0$ \\
			$1$ & $III$ \\
			$2$ & $I_0^*$ \\
			$3$ & $III^*$
		\end{tabular}
	\end{center}
\end{proposition}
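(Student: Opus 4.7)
The plan is to run Tate's algorithm on the Weierstrass equation $\eta^2 = \xi^3 - f^3 \xi$ at each closed point of $\mathbb{P}^1_{\overline{\mathbb{F}}_{3^{2n}}}$, after first reducing to a minimal integral model at the point in question. The strategy is precisely parallel to the proofs of Proposition \ref{fiber-quadratic} and of the preceding proposition for $E_{4,\tau}$; the only new feature is that, since the coefficient of $\xi$ is now $f^3$ rather than $f$, the minimization at $t-\alpha$ and at infinity consumes the exponent $3\ell$ (respectively $3\deg f$) four units at a time, which is exactly what forces the auxiliary integer $k$ in the statement to record the residue $3\ell \pmod 4$.

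First I would handle a finite fiber. Fix $\alpha \in \overline{\mathbb{F}}_{3^{2n}}$, set $\pi = t-\alpha$, and write $f = \pi^\ell \tilde f$ with $\tilde f(\alpha)\neq 0$. Substituting $\xi = \pi^{2m} x$, $\eta = \pi^{3m} y$ transforms the equation into
\[
y^2 = x^3 - \pi^{\,3\ell - 4m}\,\tilde f^{\,3}\, x,
\]
and choosing $m = \lfloor 3\ell/4 \rfloor$ (the largest $m$ for which the exponent stays nonnegative) gives the minimal integral model $y^2 = x^3 - \pi^{k}\,\tilde f^{3}\, x$ at $\pi$, where $k = 3\ell - 4\lfloor 3\ell/4\rfloor \in\{0,1,2,3\}$. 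For the fiber at infinity I would switch to the local parameter $s = 1/t$, write $f(t) = t^{\deg f} F(s)$ with $F(0)\neq 0$, and apply the analogous change of variables $\xi = t^{2n'}\gamma$, $\eta = t^{3n'}\delta$ with $n' = \lceil 3\deg f/4\rceil$; dividing by $t^{6n'}$ yields $\delta^2 = \gamma^3 - s^{k} F^{3}\gamma$ with $k = 4n' - 3\deg f \in\{0,1,2,3\}$, which is exactly the definition given in the statement.

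The remaining step is to feed the resulting model $y^2 = x^3 - \pi^k u\, x$ (with $u$ a unit at $\pi$) into Tate's algorithm. In characteristic $3$ the discriminant of a short Weierstrass equation $y^2 = x^3 + a_4 x$ is $\Delta = -a_4^{3}$, so here $v_\pi(\Delta) = 3k$. Combined with the obvious singular reduction for $k\geq 1$, Tate's algorithm then yields fibers of types $I_0$, $III$, $I_0^*$, $III^*$ for $k = 0,1,2,3$ respectively (these are the unique Kodaira types with $v(\Delta) = 0, 3, 6, 9$ appearing at a short Weierstrass model $y^2 = x^3 + a_4 x$), which is exactly the table in the statement.

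Honestly there is no substantial obstacle here, since the procedure mirrors the two earlier fiber propositions. The only point that requires any care is the bookkeeping of $k$: the asymmetry between the floor at a finite point and the ceiling at $\infty$ comes from the fact that $3\ell - 4\lfloor 3\ell/4\rfloor$ and $4\lceil 3d/4\rceil - 3d$ pick out different representatives of $3\ell, 3d \pmod 4$, and one must check in each case that the exponent produced really lies in $\{0,1,2,3\}$ and that no further minimizing substitution is available, which is immediate because $u$ is a $\pi$-adic unit.
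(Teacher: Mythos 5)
Your proposal is correct and follows exactly the route the paper intends (and leaves implicit): minimize the model at each place via $\xi=\pi^{2m}x$, $\eta=\pi^{3m}y$ (resp.\ the substitution in $s=1/t$ at infinity), which produces $y^2=x^3-\pi^k u\,x$ with $u$ a unit and $k=3\ell-4\lfloor 3\ell/4\rfloor$, and then apply Tate's algorithm, matching the omitted proof that the paper describes as ``a direct application of Tate's algorithm'' analogous to Proposition~\ref{fiber-quadratic}. One small caution: in characteristic $3$ the valuation $v(\Delta)=3k$ alone does not pin down the Kodaira type (wild types $II$, $IV$ can have inflated $v(\Delta)$), so the justification should rest on running Tate's steps on the model with $a_3=a_6=0$ — which is exactly what rules those types out — rather than on the parenthetical ``unique type with given $v(\Delta)$'' remark.
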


\vspace{0.3cm}

\noindent{\large \textbf{The sextic twists}}

\vspace{0.3cm}

\noindent Consider $\Phi_{-1, 1}$, which is an order $6$ morphism of $E$. We want a curve also admitting an order $6$ morphism. The characteristic of the fields we are working on is $3$, which divides $6$, so we cannot consider a Kummer extension as in \cite{BTT}. We first choose a non-square element $f \in \mathbb{F}_{3^{2n}}(\mathcal{X})$ and consider the field extension $\mathbb{F}_{3^{2n}}(\mathcal{X})(s)$ given by
$$s^2 = f.$$
For any element $g \in \mathbb{F}_{3^{2n}}(\mathcal{X})(s)$ which cannot be written as $h^3 - h$, the field extension $\mathbb{F}_{3^{2n}}(\mathcal{X})(s,w)$ given by
$$w^3 - w = g$$
will be of degree $6$ over $\mathbb{F}_{3^{2n}}(\mathcal{X})$. The morphism $\tau$ given by
$$\tau: (x, y, s, w) \mapsto (x + 1, -y, -s, w + 1)$$
has order $6$. In addition, since $w^3 - w$ is fixed by $\tau$, we must have $g \in \mathbb{F}_{3^{2n}}(\mathcal{X})(s)^{\tau} = \mathbb{F}_{3^{2n}}(\mathcal{X})$. Therefore, we consider the curve
$$\mathcal{H}: \left\{
\begin{array}{ll}
	s^2 & = f \\
	w^3 - w & = g
\end{array}
\right., \quad \textrm{with } f, g \in \mathbb{F}_{3^{2n}}(\mathcal{X}),$$
where $f$ is not a square and $g$ is not of the form $h^3 - h$ for any $h \in \mathbb{F}_{3^{2n}}(\mathcal{X})$.

Denoting again by $\mathbb{F}$ the fixed subfield of $\mathbb{F}_{3^{2n}}(\mathcal{X})(x, y, s, w)$ by $\Phi_{-1, 1} \times \tau$ yields
$$\mathbb{F}_{3^{2n}}(\mathcal{X})(s^2(x - w), s^3y) \subseteq \mathbb{F}.$$
One can obtain equality by proceeding as in the quartic twist case with the field extensions
$$\mathbb{F}_{3^{2n}}(\mathcal{X})(x - w, y, s) / \mathbb{F}_{3^{2n}}(\mathcal{X})(s^2(x - w), s^3y) \quad \textrm{ with basis } \{1, s\}$$
and
$$\mathbb{F}_{3^{2n}}(\mathcal{X})(x, y, s, w) / \mathbb{F}_{3^{2n}}(\mathcal{X})(x - w, y, s) \quad \textrm{ with basis } \{1, x, x^2\}.$$
Denoting $\xi = s^2(x - w)$ and $\eta = s^3y$ shows that the expression of the generic fiber of the elliptic surface given by $(E \times \mathcal{G}) / \langle \Phi_{-1, 1} \times \tau \rangle \rightarrow \mathcal{X}$ is
$$E_{6, \tau}: \eta^2 = \xi^3 - f^2\xi + f^3g.$$

\begin{remark}
	The morphisms $\Phi_{-1, 1} \times \tau^5$, $\Phi_{-1, -1} \times \tau$ and $\Phi_{-1, -1} \times \tau^5$ have also order $6$. They yield, respectively, the elliptic curves
	$$E_{6, \tau^5}: \eta^2 = \xi^3 - f^2\xi - f^3g,$$
	$E_{6, \tau^5}$ and $E_{6, \tau}$.
\end{remark}

\begin{proposition}\label{prop:sextic-twists}
	Let $\mathcal{X}$ be a curve defined over $\mathbb{F}_{3^{2n}}$ and let $f, g \in \mathbb{F}_{3^{2n}}(\mathcal{X})$ be such that $f$ is not a square and $g$ is not of the form $h^3 - h$ for any $h \in \mathbb{F}_{3^{2n}}(\mathcal{X})$. The elliptic curve $E / \mathbb{F}_3: y^2 = x^3 - x$ admits the sextic twists 
	$$E_{6, \tau}: \eta^2 = \xi^3 - f^2\xi + f^3g \quad \textrm{and} \quad E_{6, \tau^5}: \eta^2 = \xi^3 - f^2\xi - f^3g,$$
	which are isomorphic over the function field $\mathbb{F}_{3^{2n}}(\mathcal{X})$.
\end{proposition}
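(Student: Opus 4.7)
The plan is to mirror the template already used for the quadratic, cubic and quartic twists. With the fixed subfield $\mathbb{F}$ and the generic fiber equation of $E_{6,\tau}$ already determined in the paragraphs preceding the proposition, three tasks remain: to exhibit an explicit isomorphism $E \cong E_{6,\tau}$, to argue that it cannot descend to any proper intermediate subfield between $\mathbb{F}_{3^{2n}}(\mathcal{X})$ and $\mathbb{F}_{3^{2n}}(\mathcal{H})$, and to construct an isomorphism between $E_{6,\tau}$ and $E_{6,\tau^5}$ defined over $\mathbb{F}_{3^{2n}}(\mathcal{X})$.

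For the first two tasks I would write down
$$
\begin{array}{ccc}
E & \cong & E_{6,\tau} \\
(x,y) & \mapsto & (s^2(x-w),\, s^3 y) \\
(f^{-1}\xi + w,\, s^{-3}\eta) & \mapsfrom & (\xi,\eta)
\end{array}
$$
and verify the forward direction using $s^2 = f$, $w^3 - w = g$ and the characteristic-$3$ Frobenius identity $(a+b)^3 = a^3 + b^3$: indeed, writing $x = f^{-1}\xi + w$ gives
$$x^3 - x = f^{-3}\xi^3 + w^3 - f^{-1}\xi - w = f^{-3}\xi^3 - f^{-1}\xi + g,$$
and multiplying $y^2 = x^3 - x$ by $f^3$ produces $\eta^2 = \xi^3 - f^2\xi + f^3 g$. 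The analogous computation with $\Phi_{-1,1}\times\tau^5$ yields $E \cong E_{6,\tau^5}$ through $(x,y) \mapsto (s^2(x+w),\, s^3 y)$. Both isomorphisms live over $\mathbb{F}_{3^{2n}}(\mathcal{H}) = \mathbb{F}_{3^{2n}}(\mathcal{X})(s,w)$, a degree-$6$ extension of $\mathbb{F}_{3^{2n}}(\mathcal{X})$. Since $f$ is not a square and $g$ is not of Artin-Schreier form in $\mathbb{F}_{3^{2n}}(\mathcal{X})$, the elements $s$ and $w$ generate distinct quadratic and cubic subextensions respectively, so neither belongs to any proper intermediate subfield of smaller degree, and one concludes that $E_{6,\tau}$ and $E_{6,\tau^5}$ are genuine sextic twists of $E$.

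For the final task, let $i \in \mathbb{F}_9 \subseteq \mathbb{F}_{3^{2n}}$ with $i^2 = -1$ and consider
$$\nu : E_{6,\tau} \longrightarrow E_{6,\tau^5}, \qquad (\xi, \eta) \mapsto (-\xi,\, i\eta).$$
A direct substitution gives
$$(i\eta)^2 = -\eta^2 = -(\xi^3 - f^2\xi + f^3 g) = (-\xi)^3 - f^2(-\xi) - f^3 g,$$
so $\nu$ indeed lands in $E_{6,\tau^5}$; its inverse is $(\xi,\eta) \mapsto (-\xi,\, -i\eta)$, and both maps are defined over $\mathbb{F}_{3^{2n}}$. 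No step presents a real obstacle, since the calculations are elementary substitutions in characteristic $3$ and the only conceptual point — that the twist has order exactly $6$ rather than $2$ or $3$ — is handled by the hypotheses on $f$ and $g$, exactly as in the previous sections.
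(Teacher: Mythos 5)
Your proposal is correct and follows essentially the same route as the paper: the same explicit isomorphisms $E \cong E_{6,\tau}$, $(x,y) \mapsto (s^2(x-w), s^3y)$, and $E \cong E_{6,\tau^5}$, $(x,y) \mapsto (s^2(x+w), s^3y)$, over the degree-$6$ extension $\mathbb{F}_{3^{2n}}(\mathcal{X})(s,w)$, together with $\nu(\xi,\eta) = (-\xi, i\eta)$ over $\mathbb{F}_{3^{2n}}(\mathcal{X})$ itself. Your version in fact spells out the substitution checks that the paper leaves implicit, so it is, if anything, slightly more detailed while matching the paper's argument.
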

\begin{proof}
	The isomorphisms between these elliptic curves are given by
	$$\begin{array}{ccc}
		E & \cong & E_{6, \tau} \\
		(x, y) & \mapsto & (s^2(x - w), s^3y) \\
		(s^{-2}\xi + w, s^{-3}\eta) & \mapsfrom & (\xi, \eta)
	\end{array},$$
	$$\begin{array}{ccc}
		E & \cong & E_{6, \tau^5} \\
		(x, y) & \mapsto & (s^2(x + w), s^3y) \\
		(s^{-2}\xi - w, s^{-3}\eta) & \mapsfrom & (\xi, \eta)
	\end{array}$$
	and
	$$\begin{array}{ccc}
		E_{6, \tau} & \cong & E_{6, \tau^5} \\
		(\xi, \eta) & \mapsto & (-\xi, i\eta) \\
		(-\xi, -i\eta) & \mapsfrom & (\xi, \eta)
	\end{array},$$
	where $i \in \mathbb{F}_9$ is such that $i^2 = -1$. The first two isomorphisms are defined over a sextic extension of $\mathbb{F}_{3^{2n}}(\mathcal{X})$, while the last one is defined over $\mathbb{F}_{3^{2n}}(\mathcal{X})$.
\end{proof}

\begin{proposition}
	Let $\mathcal{X}$ be a curve defined over $\mathbb{F}_{3^{2n}}$, $\mathcal{C}$ be an Artin-Schreier extension of $\mathcal{X}$ given by $\mathcal{C}: w^3 - w = g$, for some $g \in \mathbb{F}_{3^{2n}}(\mathcal{X})$ which is not of the form $h^3 - h$ for any $h \in \mathbb{F}_{3^{2n}}(\mathcal{X})$. Consider $\mathcal{D}$ a Kummer extension of $\mathcal{X}$ given by $s^2 = f$, for some non-square element $f \in \mathbb{F}_{3^{2n}}(\mathcal{X})$ and $\mathcal{H}$ be a covering of $\mathcal{X}$ given by
	$$\mathcal{H}: \left\{
	\begin{array}{ll}
		s^2 & = f \\
		w^3 - w & = g
	\end{array}
	\right..$$
	Denote by $L_{E}(T)$ the $L$-function of the elliptic curve $E$ over $\mathbb{F}_{3^{2n}}$ and write the $L$-functions of $\mathcal{X}$, of $\mathcal{C}$, of $\mathcal{D}$ and of $\mathcal{H}$ as, respectively,
	$$L_{\mathcal{X}}(T) = L_{E}(T)^{m(\mathcal{X})} \cdot X(T), \quad L_{\mathcal{C}}(T) = L_{E}(T)^{m(\mathcal{C})} \cdot C(T),$$
	$$L_{\mathcal{D}}(T) = L_{E}(T)^{m(\mathcal{D})} \cdot D(T)$$
	and
	$$L_{\mathcal{H}}(T) = L_{E}(T)^{m(\mathcal{H})} \cdot H(T),$$
	with 
	$$\gcd(L_{E}, X) = \gcd(L_{E}, C) = \gcd(L_{E}, D) = \gcd(L_{E}, H) = 1,$$
	then 
	{\small $$\textrm{rank}\ E_{6, \tau}(\mathbb{F}_{3^{2n}}(\mathcal{X})) = \textrm{rank}\ E_{6, \tau^5}(\mathbb{F}_{3^{2n}}(\mathcal{X})) = 2[m(\mathcal{H}) - m(\mathcal{C}) - m(\mathcal{D}) + m(\mathcal{X})].$$}
\end{proposition}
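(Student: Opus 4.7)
The plan is to reduce the sextic case to the quadratic and cubic twist results already established by exploiting the tower of function fields
$$\mathbb{F}_{3^{2n}}(\mathcal{X}) \subset \mathbb{F}_{3^{2n}}(\mathcal{D}) \subset \mathbb{F}_{3^{2n}}(\mathcal{H}).$$
The key first observation is that over $\mathbb{F}_{3^{2n}}(\mathcal{D})$, where $f = s^2$ is a square, the change of variables $(\xi,\eta) = (s^2\xi', s^3\eta')$ transforms the equation of $E_{6,\tau}$ into $(\eta')^2 = (\xi')^3 - \xi' + g$; in other words, $E_{6,\tau}$ is isomorphic to the cubic twist $E_{3,\tau^2}$ over $\mathbb{F}_{3^{2n}}(\mathcal{D})$. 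Since $\mathbb{F}_{3^{2n}}(\mathcal{H})/\mathbb{F}_{3^{2n}}(\mathcal{D})$ is precisely the Artin-Schreier extension $w^3 - w = g$, Theorem \ref{teo:rank-cubic-twist} applies verbatim (with the roles of $\mathcal{X}$ and $\mathcal{C}$ now played by $\mathcal{D}$ and $\mathcal{H}$) and yields
$$\textrm{rank}\ E_{6,\tau}(\mathbb{F}_{3^{2n}}(\mathcal{D})) = \textrm{rank}\ E_{3,\tau^2}(\mathbb{F}_{3^{2n}}(\mathcal{D})) = 2[m(\mathcal{H}) - m(\mathcal{D})].$$

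Next, I would descend from $\mathbb{F}_{3^{2n}}(\mathcal{D})$ to $\mathbb{F}_{3^{2n}}(\mathcal{X})$ by the same $\pm 1$-eigenspace technique used in Proposition \ref{rank-quadratic-twist}. Setting $V' = E_{6,\tau}(\mathbb{F}_{3^{2n}}(\mathcal{D})) \otimes \mathbb{Q}$, the Galois involution $\sigma: s \mapsto -s$ of $\mathbb{F}_{3^{2n}}(\mathcal{D})/\mathbb{F}_{3^{2n}}(\mathcal{X})$ induces $\hat{\sigma}$ on $V'$, giving $V' = V'_+ \oplus V'_-$. The fixed part $V'_+$ is canonically identified with $E_{6,\tau}(\mathbb{F}_{3^{2n}}(\mathcal{X})) \otimes \mathbb{Q}$, whose $\mathbb{Q}$-dimension is precisely the rank we want. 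For $V'_-$, the condition $\hat{\sigma}(P) = -P$ (combined with $-P = (\xi,-\eta)$ on $E_{6,\tau}$) forces $\xi \in \mathbb{F}_{3^{2n}}(\mathcal{X})$ and $\eta = \eta_1 s$ with $\eta_1 \in \mathbb{F}_{3^{2n}}(\mathcal{X})$; substituting into the equation of $E_{6,\tau}$ gives $f\eta_1^2 = \xi^3 - f^2\xi + f^3g$, and dividing through by $f^3$ shows that $(\xi/f,\,\eta_1/f) \in E_{3,\tau^2}(\mathbb{F}_{3^{2n}}(\mathcal{X}))$. This identifies $V'_-$ with $E_{3,\tau^2}(\mathbb{F}_{3^{2n}}(\mathcal{X})) \otimes \mathbb{Q}$, whose dimension, by Theorem \ref{teo:rank-cubic-twist} applied to the original Artin-Schreier extension $\mathcal{C}/\mathcal{X}$, equals $2[m(\mathcal{C}) - m(\mathcal{X})]$.

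Combining these two descents via $\dim V' = \dim V'_+ + \dim V'_-$ yields
$$\textrm{rank}\ E_{6,\tau}(\mathbb{F}_{3^{2n}}(\mathcal{X})) = 2[m(\mathcal{H}) - m(\mathcal{D})] - 2[m(\mathcal{C}) - m(\mathcal{X})] = 2[m(\mathcal{H}) - m(\mathcal{C}) - m(\mathcal{D}) + m(\mathcal{X})],$$
as claimed, and the equality with $\textrm{rank}\ E_{6,\tau^5}(\mathbb{F}_{3^{2n}}(\mathcal{X}))$ is immediate from the $\mathbb{F}_{3^{2n}}(\mathcal{X})$-isomorphism established in Proposition \ref{prop:sextic-twists}. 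The step most requiring care is the identification of $V'_-$ with $E_{3,\tau^2}(\mathbb{F}_{3^{2n}}(\mathcal{X})) \otimes \mathbb{Q}$: one must verify that the assignment $(\xi, \eta_1 s) \mapsto (\xi/f, \eta_1/f)$ is a bijective group homomorphism onto the $\mathbb{F}_{3^{2n}}(\mathcal{X})$-rational points of $E_{3,\tau^2}$. This works out cleanly because the map is nothing but the restriction to $V'_-$ of the isomorphism $E_{6,\tau} \cong E_{3,\tau^2}$ over $\mathbb{F}_{3^{2n}}(\mathcal{D})$ produced in the first step; the only subtlety is confirming that the image really descends from $\mathbb{F}_{3^{2n}}(\mathcal{D})$ to $\mathbb{F}_{3^{2n}}(\mathcal{X})$, which is guaranteed by the shape of the antifixed points.
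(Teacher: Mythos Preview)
Your argument is correct and takes a genuinely different route from the paper. The paper works directly on $V = E_{6,\tau}(\mathbb{F}_{3^{2n}}(\mathcal{H})) \otimes \mathbb{Q}$, endowing it with a $\mathbb{Q}(\zeta_6)$-structure via the order-$6$ automorphism $\rho(\xi,\eta)=(\xi+f,-\eta)$ and then splitting it into the six eigenspaces of the automorphism $\hat{\tau}$ induced by $(s,w)\mapsto(-s,w+1)$. Each eigenspace is matched with one of the six twists $E$, $E_2$, $E_{3,\tau}$, $E_{3,\tau^2}$, $E_{6,\tau}$, $E_{6,\tau^5}$ over $\mathbb{F}_{3^{2n}}(\mathcal{X})$, and the already-proven rank formulas for the quadratic and cubic twists, together with $\dim_{\mathbb{Q}}V = 4m(\mathcal{H})$, yield the result.

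Your two-step descent through the intermediate field $\mathbb{F}_{3^{2n}}(\mathcal{D})$ is more economical: it reuses Theorem~\ref{teo:rank-cubic-twist} twice (once over the base $\mathcal{D}$, once over the base $\mathcal{X}$) and needs only a $\pm1$-eigenspace split rather than a $\mathbb{Q}(\zeta_6)$-linear decomposition. The one detail worth stating explicitly is that $g$ remains outside the image of $h\mapsto h^3-h$ after base change to $\mathbb{F}_{3^{2n}}(\mathcal{D})$; this holds automatically because $[\mathbb{F}_{3^{2n}}(\mathcal{D}):\mathbb{F}_{3^{2n}}(\mathcal{X})]=2$ is coprime to $3$, so the Artin--Schreier polynomial $X^3-X-g$ stays irreducible. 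The paper's approach, on the other hand, has the conceptual payoff of exhibiting all six twists of $E$ simultaneously as the eigenspaces of a single cyclic action, which makes the symmetry of the situation more visible.
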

\begin{proof}
	Note that $\rho(\xi, \eta) = (\xi + f, -\eta)$ is an order $6$ automorphism of $E_{6, \tau}$. We now fix $\zeta \in \mathbb{C}$ a $6$-th primitive root of unity. Define $V = E_{6, \tau}(\mathbb{F}_{3^{2n}}(\mathcal{H})) \otimes \mathbb{Q}$ and let $\zeta \cdot (P \otimes a) = \rho(P) \otimes a$. Since $\rho \circ \tau = \tau \circ \rho$, $V$ can be seen as a $\mathbb{Q}(\zeta)$-vector space. In this way, $\tau$ induces a linear map $\hat{\tau}$ in $V$ such that $\hat{\tau}^6 = \textrm{id}_V$, and $V$ can be decomposed into $6$ eigenspaces associated to the eigenvalues $1, \zeta, \ldots, \zeta^5$.
	
	\begin{itemize}
		\item $\textrm{Ker}(\hat{\tau} - \textrm{id}_V)$: Comparing coordinates, one sees that $0 \neq P \otimes a \in \textrm{Ker}(\hat{\tau} - \textrm{id}_V)$ if, and only if,
		$$P \otimes a = (x, y) \otimes a,$$
		with $(x, y) \in E_{6, \tau}(\mathbb{F}_{3^{2n}}(\mathcal{X}))$. Therefore, one obtains
		$$\textrm{Ker}(\hat{\tau} - \textrm{id}_V) = E_{6, \tau}(\mathbb{F}_{3^{2n}}(\mathcal{X})) \otimes \mathbb{Q}$$
		and
		$$\dim_{\mathbb{Q}} \textrm{Ker}(\hat{\tau} - \textrm{id}_V) = \textrm{rank }E_{6, \tau}(\mathbb{F}_{3^{2n}}(\mathcal{X})).$$
		
		\item $\textrm{Ker}(\hat{\tau} - \zeta \cdot \textrm{id}_V)$: This kernel is composed of the vectors $(x + fw, ys) \otimes a$, with $x, y \in \mathbb{F}_{3^{2n}}(\mathcal{X})$. Substituting in the equation of $E_{6, \tau}$ yields
		$$fy^2 = x^3 - f^2x - f^3g,$$
		which is equivalent to having $(f^{-1}x, f^{-1}y) \in E_{3, \tau}(\mathbb{F}_{3^{2n}}(\mathcal{X}))$, so
		$$\dim_{\mathbb{Q}} \textrm{Ker}(\hat{\tau} - \zeta \cdot \textrm{id}_V) = \textrm{rank }E_{3, \tau}(\mathbb{F}_{3^{2n}}(\mathcal{X})) = 2m(\mathcal{C}) - 2m(\mathcal{X}),$$
		where the last equality comes from Theorem \ref{teo:rank-cubic-twist}.		
		
		\item $\textrm{Ker}(\hat{\tau} - \zeta^2 \cdot \textrm{id}_V)$: This kernel is composed of the vectors $(x - fw, y) \otimes a$, with $x, y \in \mathbb{F}_{3^{2n}}(\mathcal{X})$. Substituting in the equation of $E_{6, \tau}$ yields
		$$y^2 = x^3 - f^2x.$$
		Hence
		$$\dim_{\mathbb{Q}} \textrm{Ker}(\hat{\tau} - \zeta^2 \cdot \textrm{id}_V) = \textrm{rank }E_2(\mathbb{F}_{3^{2n}}(\mathcal{X})) = 4m(\mathcal{D}) - 4m(\mathcal{X}),$$
		where $E_2$ is the quadratic twist given in Proposition \ref{prop:quadratic-twists} and the last equality comes from Theorem \ref{rank-quadratic-twist}.
		
		\item $\textrm{Ker}(\hat{\tau} - \zeta^3 \cdot \textrm{id}_V)$: This kernel is composed of the vectors $(x, ys) \otimes a$, with $x, y \in \mathbb{F}_{3^{2n}}(\mathcal{X})$. An argument analogous to the one we used in $\textrm{Ker}(\hat{\tau} - \zeta \cdot \textrm{id}_V)$ allows us to conclude that
		$$\dim_{\mathbb{Q}} \textrm{Ker}(\hat{\tau} - \zeta \cdot \textrm{id}_V) = \textrm{rank }E_{3, \tau^2}(\mathbb{F}_{3^{2n}}(\mathcal{X})) = 2m(\mathcal{C}) - 2m(\mathcal{X}).$$
		
		\item $\textrm{Ker}(\hat{\tau} - \zeta^4 \cdot \textrm{id}_V)$: This kernel is composed of the vectors $(x + fw, y) \otimes a$. Substituting in the equation of $E_{6, \tau}$ yields $(x, y) \in E_{6, \tau^5}(\mathbb{F}_{3^{2n}}(\mathcal{X}))$ and 
		$$\dim_{\mathbb{Q}} \textrm{Ker}(\hat{\tau} - \textrm{id}_V) = \textrm{rank }E_{6, \tau^5}(\mathbb{F}_{3^{2n}}(\mathcal{X})) = \textrm{rank }E_{6, \tau}(\mathbb{F}_{3^{2n}}(\mathcal{X})),$$
		where the last equality comes from the fact that $E_{6, \tau}$ and $E_{6, \tau^5}$ are isomorphic over $\mathbb{F}_{3^{2n}}(\mathcal{X})$.
		
		\item $\textrm{Ker}(\hat{\tau} - \zeta^5 \cdot \textrm{id}_V)$: Finally, this kernel is composed of the vectors $(x - fw, ys)$. An argument analogous to the one we used in $\textrm{Ker}(\hat{\tau} - \zeta \cdot \textrm{id}_V)$ allows us to conclude that
		$$\dim_{\mathbb{Q}} \textrm{Ker}(\hat{\tau} - \zeta^5 \cdot \textrm{id}_V) = \textrm{rank }E(\mathbb{F}_{3^{2n}}(\mathcal{X})) = 4m(\mathcal{X}).$$
	\end{itemize}
	
	The result now follows by observing that, as before, $\dim V = 4m(\mathcal{H})$.
\end{proof}

\begin{corollary}
	Let $\mathcal{X}$ be a curve defined over $\mathbb{F}_{3^{2n}}$, $\mathcal{C}$ be an Artin-Schreier extension of $\mathcal{X}$ given by $\mathcal{C}: w^3 - w = g$, for some $g \in \mathbb{F}_{3^{2n}}(\mathcal{X})$ which is not of the form $h^3 - h$ for any $h \in \mathbb{F}_{3^{2n}}(\mathcal{X})$. Consider $\mathcal{D}$ a Kummer extension of $\mathcal{X}$ given by $s^2 = f$, for some non-square element $f \in \mathbb{F}_{3^{2n}}(\mathcal{X})$ and $\mathcal{H}$ be an extension of $\mathcal{X}$ given by 
	$$\mathcal{H}: \left\{
	\begin{array}{ll}
		s^2 & = f \\
		w^3 - w & = g
	\end{array}
	\right..$$
	If $\mathcal{H}$ is maximal (resp. minimal) over $\mathbb{F}_{3^{2n}}$ for odd (resp. even) $n$ and $g(\mathcal{X})$, $g(\mathcal{C})$, $g(\mathcal{D})$ and $g(\mathcal{H})$ denote, respectively, the genera of the curves $\mathcal{X}$, $\mathcal{C}$, $\mathcal{D}$ and $\mathcal{H}$, then
	{\small $$\textrm{rank}\ E_{6, \tau}(\mathbb{F}_{3^{2n}}(\mathcal{X})) = \textrm{rank}\ E_{6, \tau^5}(\mathbb{F}_{3^{2n}}(\mathcal{X})) = 2[g(\mathcal{H}) - g(\mathcal{C}) - g(\mathcal{D}) + g(\mathcal{X})].$$}
\end{corollary}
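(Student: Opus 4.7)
The plan is to reduce the statement to the previous proposition by showing that, under the maximality (resp. minimality) hypothesis on $\mathcal{H}$, the exponents $m(\mathcal{X})$, $m(\mathcal{C})$, $m(\mathcal{D})$ and $m(\mathcal{H})$ appearing in the decomposition of the respective $L$-functions coincide with the corresponding genera, exactly as was done in Corollary \ref{maximal-cubic} for the cubic twist.

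First, observe that $\mathcal{X}$, $\mathcal{C}$ and $\mathcal{D}$ are all quotients of $\mathcal{H}$ by subgroups of $\langle \tau \rangle$: indeed $\mathcal{X} = \mathcal{H}/\langle \tau \rangle$, $\mathcal{C} = \mathcal{H}/\langle \tau^3 \rangle$, and $\mathcal{D} = \mathcal{H}/\langle \tau^2 \rangle$, where $\tau$ is the order $6$ automorphism of $\mathcal{H}$ used to construct the sextic twist. By Serre's theorem (\cite[Corollary 5.1.17]{stichtenoth}), if $\mathcal{H}$ is maximal (resp. minimal) over $\mathbb{F}_{3^{2n}}$, then each quotient $\mathcal{X}$, $\mathcal{C}$ and $\mathcal{D}$ is also maximal (resp. minimal) over $\mathbb{F}_{3^{2n}}$.

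Next, for $n$ odd, $E$ is maximal over $\mathbb{F}_{3^{2n}}$, so $L_E(T) = (3^n T + 1)^2$, and for any maximal curve $\mathcal{Y}$ over $\mathbb{F}_{3^{2n}}$ of genus $g(\mathcal{Y})$ one has
$$L_{\mathcal{Y}}(T) = (3^n T + 1)^{2g(\mathcal{Y})} = L_E(T)^{g(\mathcal{Y})},$$
so that the factor prime to $L_E(T)$ is trivial and $m(\mathcal{Y}) = g(\mathcal{Y})$. In the case where $n$ is even and $\mathcal{H}$ (and consequently $\mathcal{X}$, $\mathcal{C}$, $\mathcal{D}$) is minimal, the analogous identity $L_{\mathcal{Y}}(T) = (3^n T - 1)^{2g(\mathcal{Y})} = L_E(T)^{g(\mathcal{Y})}$ gives again $m(\mathcal{Y}) = g(\mathcal{Y})$.

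Substituting $m(\mathcal{X}) = g(\mathcal{X})$, $m(\mathcal{C}) = g(\mathcal{C})$, $m(\mathcal{D}) = g(\mathcal{D})$ and $m(\mathcal{H}) = g(\mathcal{H})$ into the rank formula of the previous proposition yields
$$\textrm{rank}\ E_{6, \tau}(\mathbb{F}_{3^{2n}}(\mathcal{X})) = \textrm{rank}\ E_{6, \tau^5}(\mathbb{F}_{3^{2n}}(\mathcal{X})) = 2[g(\mathcal{H}) - g(\mathcal{C}) - g(\mathcal{D}) + g(\mathcal{X})],$$
as claimed. The only real point to verify carefully is the application of Serre's theorem to the three quotient coverings of $\mathcal{H}$; everything else is a direct substitution.
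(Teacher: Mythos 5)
Your proof is correct and follows exactly the argument the paper intends (and uses explicitly for the cubic case in Corollary \ref{maximal-cubic}): quotients of the maximal (resp.\ minimal) curve $\mathcal{H}$ are again maximal (resp.\ minimal) by Serre's theorem, so each $m(\cdot)$ equals the corresponding genus, and the formula follows by substitution into the preceding proposition. No gaps; the identification of $\mathcal{X}$, $\mathcal{C}$, $\mathcal{D}$ as quotients of $\mathcal{H}$ by $\langle\tau\rangle$, $\langle\tau^3\rangle$, $\langle\tau^2\rangle$ is the right justification for applying Serre's theorem.
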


The proof of the following proposition is a direct application of Tate's algorithm, after some changing analogous to those we used in Proposition \ref{fiber-quadratic}, and hence omitted.

\begin{proposition}
	Let $f \in \mathbb{F}_{3^{2n}}[t]$ be a non-square polynomial and $g \in \mathbb{F}_{3^{2n}}[t]$ be a polynomial such that $g \neq h^3 - h$ for any polynomial $h$. Consider the elliptic surfaces $E_{6, \tau}$ and $E_{6, \tau^5}$ given by
	$$E_{6, \tau}: \eta^2 = \xi^3 - f^2\xi + f^3g \quad \textrm{and} \quad E_{6, \tau^5}: \eta^2 = \xi^3 - f^2\xi - f^3g.$$
	Then, for $\alpha \in \overline{\mathbb{F}}_{3^{2n}}$,
	\begin{itemize}
		\item The fiber at $t = \alpha$ is of type $I_0$ if the multiplicity of $\alpha$ as a root of $f$ is even or $0$;
		\item The fiber at $t = \alpha$ is of type $I_0^*$ if the multiplicity of $\alpha$ as a root of $f$ is odd;
	\end{itemize}
	For the fiber at infinity, we can assume that $\deg g \not\equiv 0$ (mod $3$). We then write
	$$n = \left\lceil \frac{3 \deg f + \deg g}{6} \right\rceil = \frac{3 \deg f + \deg g + k}{6},$$
	with $k \in \{1, 2, 4, 5\}$, and the fiber at infinity is given in the following table.
	
	\begin{center}
		\begin{tabular}{c|c}
			$k$ & Fiber type \\
			\hline
			$1$ & $II$ \\
			$2$ & $IV$ \\
			$4$ & $IV^*$ \\
			$5$ & $II^*$
		\end{tabular}
	\end{center}
\end{proposition}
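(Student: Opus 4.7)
The plan is to apply Tate's algorithm at each place of $\mathbb{P}^1_{\overline{\mathbb{F}}_{3^{2n}}}$ after reducing to a minimal Weierstrass model, following the template of Proposition \ref{fiber-quadratic}. Since $E_{6,\tau}$ and $E_{6,\tau^5}$ are isomorphic over $\mathbb{F}_{3^{2n}}(\mathcal{X})$ by Proposition \ref{prop:sextic-twists}, their elliptic surfaces have identical fiber types, so it suffices to treat $E_{6,\tau}$. The key preliminary observation is that in characteristic $3$ the discriminant formula for $y^2 = x^3 + Ax + B$ collapses (since $27 = 0$) to a nonzero constant times $A^3$; applied to $A = -f^2$, $B = f^3 g$, this gives $\Delta = c \cdot f^6$ for some nonzero constant $c$. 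Hence the only potentially singular fibers lie above zeros of $f$ and at infinity.

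At a finite place $\alpha \in \overline{\mathbb{F}}_{3^{2n}}$, I would write $f(t) = (t - \alpha)^\ell \tilde f(t)$ with $\tilde f(\alpha) \neq 0$, and rescale by $\xi = (t-\alpha)^{2m}\xi'$, $\eta = (t-\alpha)^{3m}\eta'$ with $m = \lfloor \ell/2 \rfloor$ to get a minimal model. For $\ell$ even this yields $v_\alpha(a_4) = v_\alpha(a_6) = 0$ and the fiber is smooth of type $I_0$. For $\ell$ odd the resulting equation becomes
$$\eta'^2 = \xi'^3 - (t - \alpha)^2 \tilde f^2 \xi' + (t - \alpha)^3 \tilde f^3 g,$$
so $v_\alpha(a_4) = 2$ and $v_\alpha(a_6) = 3$, placing us at Step 6 of Tate's algorithm. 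The cubic
$$P(T) = T^3 - \tilde f(\alpha)^2 T + \tilde f(\alpha)^3 g(\alpha)$$
has discriminant $-4(-\tilde f(\alpha)^2)^3 - 27(\tilde f(\alpha)^3 g(\alpha))^2 = \tilde f(\alpha)^6 \neq 0$ in characteristic $3$ (the $27 g^2$ term vanishes), so $P$ has three distinct roots and the fiber is $I_0^*$.

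For the fiber at infinity I would set $s = 1/t$ and rescale $\xi = X/s^{2n}$, $\eta = Y/s^{3n}$ with $n = \lceil (3 \deg f + \deg g)/6 \rceil$, obtaining
$$Y^2 = X^3 - s^{2(\deg g + k)/3}\tilde f(s)^2 X + s^k \tilde f(s)^3 \tilde g(s), \quad k = 6n - 3 \deg f - \deg g,$$
where $\tilde f(0), \tilde g(0) \neq 0$. Integrality of the exponent on the $X$-coefficient forces $3 \mid \deg g + k$, which together with the hypothesis $\deg g \not\equiv 0 \pmod 3$ leaves exactly $k \in \{1, 2, 4, 5\}$. Since $k < 6$, the equation is automatically minimal at $s = 0$, and I would then run Tate's algorithm on the pair $(v(a_4), v(a_6)) = (2(\deg g + k)/3, k)$: the case $k = 1$ terminates at the $\pi^2 \nmid a_6$ test of Step 3 with type $II$; $k = 2$ terminates at the $\pi^3 \nmid b_6$ test of Step 5 with type $IV$; and $k = 4, 5$ advance through the iterated blow-ups of Steps 6--10 to produce $IV^*$ and $II^*$ respectively.

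The main obstacle I anticipate is the last pair $k \in \{4,5\}$. Because $v(\Delta) = 3 v(a_4)$ in characteristic $3$, Ogg's formula cannot classify the fiber directly from $v(\Delta)$ and every blow-up in Tate's algorithm must be tracked by hand via successive substitutions $X \mapsto X + r$ and verification of the appropriate reduced cubics. The bookkeeping is routine but lengthy, which is presumably why the paper leaves it implicit; the dependence of the fiber type on the residue $k \bmod 6$ mirrors exactly the classification already obtained in Proposition \ref{fiber-cubic-twist} for the cubic twists, so no phenomenon beyond the characteristic-$3$ degeneration of the discriminant arises.
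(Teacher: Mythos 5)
Your proposal is correct and takes essentially the route the paper intends but leaves implicit: observe that in characteristic $3$ the discriminant is a unit times $f^6$, pass to minimal models at the zeros of $f$ and at infinity exactly as in Proposition \ref{fiber-quadratic}, and run Tate's algorithm, which reproduces the stated $I_0$/$I_0^*$ dichotomy and the table for $k \in \{1,2,4,5\}$. The only cosmetic slips — $v_\alpha(a_6)=0$ (resp.\ $3$) holds only when $g(\alpha)\neq 0$, and $3 \mid (\deg g + k)$ is automatic from $k \equiv -\deg g \pmod 3$ rather than forced by integrality — do not affect anything, since good reduction follows from $v(\Delta)=0$ alone and the $I_0^*$ step uses only that the reduced cubic has discriminant $-4\,a_{4,2}^3 \not\equiv 0$ independently of $a_{6,3}$.
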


\section{Acknowledgments}
This work was developed during the author's PhD studies at the Instituto de Ciências Matemáticas e Computação of the Universidade de São Paulo and at the Bernoulli Institute of the Rijksuniversiteit Groningen. The author was supported by CNPq (Brazil), grant 140589/2021-0, by CAPES (Brazil), finance code 001.

The author is thankful to Jaap Top for his suggestions on how to improve this work and for proofreading this paper.


\begin{thebibliography}{}
    \bibitem{BTT}BOOTSMA, Sven; TAFAZOLIAN, Saeed; TOP, Jaap. \textbf{Maximal curves and Tate-Shafarevich results for quartic and sextic twists}. Finite fields and their applications, v. 91, p. 102256, 2023.
    
    \bibitem{BGST}BORGES, Herivelto et al. \textbf{Tate-Shafarevich results for quartic twists in characteristic $2$}. arXiv preprint arXiv:2405.13408, 2024.

    \bibitem{bouw}BOUW, Irene et al. \textbf{Zeta functions of a class of Artin–Schreier curves with many automorphisms}. In: Directions in Number Theory: Proceedings of the 2014 WIN3 Workshop. Cham: Springer International Publishing, 2016. p. 87-124.
    
    \bibitem{elkies}ELKIES, Noam D. \textbf{Mordell-Weil lattices in characteristic 2: I. Construction and first properties}. International Mathematics Research Notices, v. 1994, n. 8, p. 343-361, 1994.

    \bibitem{WLang}LANG, William E. \textbf{Extremal rational elliptic surfaces in characteristic p. II: Surfaces with three or fewer singular fibres}. Arkiv för Matematik, v. 32, n. 2, p. 423-448, 1994.
    
    \bibitem{mordell}MORDELL, Louis Joel. \textbf{On the rational resolutions of the indeterminate equations of the third and fourth degree}. In: Proc. Cambridge Phil. Soc. 1922. p. 179-192.

    \bibitem{ShiodaSchuett}SCHÜTT, Matthias; SHIODA, Tetsuji. \textbf{Elliptic surfaces}, arXiv preprint arXiv:0907.0298 (2009).

    \bibitem{ShiodaSchuettBook}SCHÜTT, Matthias; SHIODA, Tetsuji. \textbf{Mordell–Weil lattices}. In: Mordell–Weil Lattices. Singapore: Springer Singapore, 2019. p. 115-143.
    
    \bibitem{silverman}SILVERMAN, Joseph H. \textbf{The arithmetic of elliptic curves}. New York: Springer, 2009.

    \bibitem{stichtenoth}STICHTENOTH, Henning. \textbf{Algebraic function fields and codes}. Berlin, Heidelberg: Springer Berlin Heidelberg, 2009.

    \bibitem{Tate66}TATE, John. \textbf{Endomorphisms of abelian varieties over finite fields}. Inventiones mathematicae, v. 2, n. 2, p. 134-144, 1966.

    \bibitem{tate}TATE, John. \textbf{Algorithm for determining the type of a singular fiber in an elliptic pencil}. In: Modular Functions of One Variable IV: Proceedings of the International Summer School, University of Antwerp, RUCA, July 17–August 3, 1972. Berlin, Heidelberg: Springer Berlin Heidelberg, 2006. p. 33-52.

    \bibitem{TateSha67}TATE, John; SHAFAREVICH, Igor Rostislavovich. \textbf{The rank of elliptic curves}. In: Doklady Akademii Nauk. Russian Academy of Sciences, 1967. p. 770-773.

    \bibitem{washington}WASHINGTON, Lawrence C. \textbf{Elliptic curves: number theory and cryptography}. Chapman and Hall/CRC, 2008.
\end{thebibliography}

\end{document}